\numberwithin{equation}{section}
\newtheorem{theorem}{Theorem}[section]
\newtheorem{proposition}[theorem]{Proposition}
\newtheorem{corollary}[theorem]{Corollary}
\newtheorem{lemma}[theorem]{Lemma}
\newtheorem{conjecture}[theorem]{Conjecture}
\newtheorem{question}[theorem]{Question}
\newtheorem{defn}[theorem]{Definition}
\theoremstyle{definition}
\newcommand{\maj}{{\mathrm {maj}}}
\newcommand{\inv}{{\mathrm {inv}}}
\newcommand{\Des}{{\mathrm {Des}}}
\newcommand{\Hilb}{{\mathrm {Hilb}}}
\newcommand{\grFrob}{{\mathrm {grFrob}}}
\newcommand{\Ch}{{\mathrm {Ch}}}
\newcommand{\NSym}{{\mathbf {NSym}}}
\newcommand{\QSym}{{\mathrm {QSym}}}
\newcommand{\SYT}{{\mathrm {SYT}}}
\newcommand{\ch}{{\mathbf {ch}}}
\newcommand{\Frob}{{\mathrm {Frob}}}
\newcommand{\initial}{{\mathrm {in}}}
\newcommand{\symm}{{\mathfrak{S}}}
\newcommand{\wt}{{\mathrm{wt}}}
\newcommand{\FF}{{\mathbb {F}}}
\newcommand{\CC}{{\mathbb {C}}}
\newcommand{\QQ}{{\mathbb {Q}}}
\newcommand{\ZZ}{{\mathbb {Z}}}
\newcommand{\NNN}{{\mathcal{N}}}
\newcommand{\CCC}{{\mathcal{C}}}
\newcommand{\MMM}{{\mathcal{M}}}
\newcommand{\BBB}{{\mathcal{B}}}
\newcommand{\xx}{{\mathbf {x}}}
\newcommand{\II}{{\mathbf {I}}}
\newcommand{\yy}{{\mathbf {y}}}
\newcommand{\TT}{{\mathbf {T}}}
\newcommand{\sss}{{\mathbf {s}}}
\newcommand{\rev}{{\mathrm {rev}}}
\begin{document}

\title[Tail positive words and generalized coinvariant algebras]
{Tail positive words and generalized coinvariant algebras}

\author{Brendon Rhoades}
\address
{Department of Mathematics \newline \indent
University of California, San Diego \newline \indent
La Jolla, CA, 92093-0112, USA}
\email{bprhoades@math.ucsd.edu}

\author{Andrew Timothy Wilson}
\address
{Department of Mathematics \newline \indent
University of Pennsylvania \newline \indent
Philadelphia, PA, 19104-6395, USA}
\email{andwils@math.upenn.edu}

\begin{abstract}
Let $n,k,$ and $r$ be nonnegative integers and let $S_n$ be the symmetric group. 
We introduce a quotient $R_{n,k,r}$ of the polynomial ring
$\QQ[x_1, \dots, x_n]$ in $n$ variables which carries the structure of a graded $S_n$-module.  
When $r \geq n$ or $k = 0$ the quotient $R_{n,k,r}$ reduces to the classical coinvariant algebra $R_n$
attached to the symmetric group.  
Just as algebraic properties of $R_n$ are controlled by combinatorial properties of permutations in $S_n$,
the algebra of $R_{n,k,r}$ is controlled by the combinatorics of objects called {\em tail positive words}.
We calculate the standard monomial basis of $R_{n,k,r}$ and 
its graded $S_n$-isomorphism type.  
We also view $R_{n,k,r}$ as a module over the 0-Hecke algebra $H_n(0)$,
prove that $R_{n,k,r}$ is a projective 0-Hecke module,
and calculate its quasisymmetric and nonsymmetric 0-Hecke characteristics.
We conjecture a relationship between our quotient
$R_{n,k,r}$ and the delta operators of the theory of Macdonald polynomials.
\end{abstract}

\keywords{symmetric function, coinvariant algebra}
\maketitle

\section{Introduction}
\label{Introduction}

Consider the action of the symmetric group $S_n$ on $n$ letters on the polynomial ring
$\QQ[\xx_n] := \QQ[x_1, \dots, x_n]$ given by variable permutation.  The polynomials belonging to
the invariant subring  
\begin{equation}
\QQ[\xx_n]^{S_n} := \{ f(\xx_n) \in \QQ[\xx_n] \,:\, \pi.f(\xx_n) = f(\xx_n) \text{ for all $\pi \in S_n$} \}
\end{equation}
are the {\em symmetric polynomials} in the variable set $\xx_n$.
Let $e_d(\xx_n)$ be the {\em elementary symmetric function} of degree $d$, that is
$e_d(\xx_n) = \sum_{1 \leq i_1 < \cdots < i_d \leq n} x_{i_1} \cdots x_{i_d}$.
It is well known that the set $\{e_1(\xx_n), \dots, e_n(\xx_n) \}$ gives an algebraically independent
homogeneous collection of generators for the ring $\QQ[\xx_n]^{S_n}$.

Let $\QQ[\xx_n]^{S_n}_+ \subset \QQ[\xx_n]^{S_n}$ be the subspace of symmetric polynomials
with vanishing constant term.  The {\em invariant ideal} $I_n \subseteq \QQ[\xx_n]$ is the ideal
\begin{equation}
I_n := \langle \QQ[\xx_n]^{S_n}_+ \rangle = \langle e_1(\xx_n), \dots, e_n(\xx_n) \rangle
\end{equation}
generated by this subspace.  The {\em coinvariant algebra} $R_n$ is the corresponding quotient:
\begin{equation}
R_n := \QQ[\xx_n]/I_n.
\end{equation}
The algebra $R_n$ is a graded $S_n$-module.

The coinvariant algebra is among the most important representations in algebraic combinatorics;  algebraic 
properties of $R_n$ are deeply tied to combinatorial properties of permutations in $S_n$.
E. Artin proved \cite{Artin} 
that the collection of `sub-staircase' monomials $\{ x_1^{i_1} \cdots x_n^{i_n} \,:\, 0 \leq i_j < j \}$
descends to a vector space basis for $R_n$, so that the Hilbert series of $R_n$ is given by
\begin{equation}
\mathrm{Hilb}(R_n;q) = (1+q)(1+q+q^2) \cdots (1+q+\cdots+q^{n-1}) = [n]!_q,
\end{equation}
the standard $q$-analog of $n!$.
Chevalley \cite{C}
 proved that as an {\em ungraded} $S_n$-module, we have $R_n \cong \QQ[S_n]$,
the regular representation of $S_n$.  Lusztig (unpublished)
and Stanley \cite{Stanley} refined this result to describe the 
{\em graded} isomorphism type of $R_n$ in terms of the major index statistic on standard Young tableaux.

In this paper we will study the following generalization of the coinvariant algebra $R_n$.  
Recall that the degree $d$ {\em homogeneous symmetric function} in $\QQ[\xx_n]$ is given by
$h_d(\xx_n) := \sum_{1 \leq i_1 \leq \cdots \leq i_d \leq n} x_{i_1} \cdots x_{i_d}$.

\begin{defn}
\label{main-definition}
Let $n,k,$ and $r$ be nonnegative integers with $r \leq n$.
Let $I_{n,k,r} \subseteq \QQ[\xx_n]$ be the ideal
$$
I_{n,k,r} := \langle h_{k+1}(\xx_n), h_{k+2}(\xx_n), \dots, h_{k+n}(\xx_n), e_n(\xx_n), e_{n-1}(\xx_n), \dots, e_{n-r+1}(\xx_n)
\rangle
$$
and let 
$$
R_{n,k,r} := \QQ[\xx_n]/I_{n,k,r}
$$
be the corresponding quotient ring.
\end{defn}

The ideal $I_{n,k,r}$ is homogeneous and stable under the action of the symmetric group, so that 
$R_{n,k,r}$ is a graded $S_n$-module.   Since the generators of $I_{n,k,r}$ are symmetric polynomials, we have 
the containment of ideals $I_{n,k,r} \subseteq I_n$, so that $R_{n,k,r}$ projects onto the classical
coinvariant algebra $R_n$.  If 
$r \geq n$
or $k = 0$ we have the equality $I_{n,k,r} = I_n$, so that 
$R_{n,k,r} = R_n$.

Just as  algebraic properties of $R_n$ are controlled by  combinatorics of permutations 
$\pi_1 \dots \pi_n$
of the set $\{1, 2, \dots, n\}$,
algebraic properties of $R_{n,k,r}$ will be controlled by the combinatorics of permutations 
$\pi_1 \dots \pi_{n+k}$
of 
the {\em multiset} $\{0^k, 1, 2, \dots, n\}$ whose last $r$ entries $\pi_{n+k-r+1} \dots \pi_{n+k-1} \pi_{n+k}$ are 
all nonzero.  Thinking of positive letters as weights, we will call such permutations {\em $r$-tail positive}.

Let $S_{n,k,r}$ be the collection of all $r$-tail positive permutations of the multiset 
$\{0^k, 1, 2, \dots, n\}$.  
For example, we have
\begin{equation*}
S_{2,2,1} = \{0012, 0021, 0102, 0201, 1002, 2001\}.
\end{equation*}
By considering the possible locations of the $k$ $0$'s in an element of $S_{n,k,r}$, it is immediate that
\begin{equation}
\label{tail-positive-count}
|S_{n,k,r}| = {n+k-r \choose k} \cdot |S_n| =  {n+k-r \choose k} \cdot n!.
\end{equation}
The basic enumeration of Equation~\ref{tail-positive-count} will manifest in  Hilbert series as
\begin{equation}
\label{hilbert-series-formula}
\Hilb(R_{n,k,r} ;q) = {n + k - r \brack k}_q \cdot \Hilb(R_n; q) = {n + k - r \brack k}_q \cdot [n]!_q,
\end{equation}
where ${m \brack i}_q := \frac{[m]!_q}{[i]!_q [m-i]!_q}$ is the usual $q$-binomial coefficient.  Going even further,
we have the following graded Frobenius image
\begin{equation}
\label{frobenius-series-formula}
\grFrob(R_{n,k,r};q) = {n + k - r \brack k}_q \cdot \grFrob(R_n; q) = 
{n + k - r \brack k}_q \cdot \sum_{T \in \mathrm{SYT}(n)} s_{\mathrm{shape}(T)},
\end{equation}
which implies that the quotient $R_{n,k,r}$ consists of ${n+k-r \choose k}$ copies of the coinvariant algebra $R_n$,
with grading shifts given by a $q$-binomial coefficient.
The authors know of no direct way to see this from Definition~\ref{main-definition}.

The ideal $I_{n,k,r}$ defining the quotient $R_{n,k,r}$ is of `mixed' type -- its generators come in two flavors:
the homogeneous symmetric functions $h_{k+1}(\xx_n), h_{k+2}(\xx_n), \dots, h_{k+n}(\xx_n)$
and the elementary symmetric functions $e_n(\xx_n), e_{n-1}(\xx_n), \dots, e_{n-r+1}(\xx_n)$.
Several mixed ideals have recently been introduced to give combinatorial generalizations of the coinvariant algebra.
\begin{itemize}
\item  Let $k \leq n$.  Haglund, Rhoades, and Shimozono \cite{HRS} studied the quotient of $\QQ[\xx_n]$ by the ideal
\begin{equation}
\langle x_1^k, x_2^k, \dots, x_n^k, e_n(\xx_n), e_{n-1}(\xx_n), \dots, e_{n-k+1}(\xx_n) \rangle.
\end{equation}
The generators  are high degree $S_n$-invariants $e_n(\xx_n), \dots, e_{n-k+1}(\xx_n)$ together with 
a homogeneous system of parameters $x_1^k, \dots, x_n^k$ of degree $k$ carrying the 
defining representation of $S_n$.  Algebraic properties of the corresponding quotient are controlled by
 combinatorial properties of $k$-block ordered set partitions of $\{1, 2, \dots, n\}$.
\item  Let $r \geq 2$ and let $\ZZ_r \wr S_n$ be the group of $n \times n$ monomial matrices whose
nonzero entries
are $r^{th}$ complex roots of unity (this is the group of `$r$-colored permutations' of $\{1, 2, \dots, n\}$).
Let $k \leq n$ be non-negative integers.  Chan and Rhoades \cite{CR} studied the quotient of $\CC[\xx_n]$ by the ideal
\begin{equation}
\langle x_1^{kr+1}, x_2^{kr+1}, \dots, x_n^{kr+1}, e_n(\xx_n^r), e_{n-1}(\xx_n^r), \dots, e_{n-k+1}(\xx_n^r) \rangle,
\end{equation}
where $f(\xx_n^r) = f(x_1^r, \dots, x_n^r)$ for any polynomial $f$.  The generators here are high degree
$\ZZ_r \wr S_n$-invariants $e_n(\xx_n^r), \dots, e_{n-k+1}(\xx_n^r)$ together with a h.s.o.p. 
$x_1^{kr+1}, \dots, x_n^{kr+1}$ of degree $kr+1$ carrying the dual of
the defining representation of $\ZZ_r \wr S_n$.  Algebraic properties of the corresponding quotient are controlled
by $k$-dimensional faces in the Coxeter complex attached to $\ZZ_r \wr S_n$. 
\item Let $\FF$ be any field and let $H_n(0)$ be the 0-Hecke algebra over $\FF$ of rank $n$; the algebra 
$H_n(0)$ acts on the polynomial ring $\FF[\xx_n]$ by isobaric divided difference operators.  
Let $k \leq n$ be positive integers.
Huang and Rhoades \cite{HR} studied the quotient of $\FF[\xx_n]$ by the ideal
\begin{equation}
\langle h_k(x_1), h_k(x_1, x_2), \dots, h_k(x_1, x_2, \dots, x_n), e_n(\xx_n), e_{n-1}(\xx_n), \dots, e_{n-k+1}(\xx_n) \rangle.
\end{equation}
Once again, the generators consist of high degree 
$H_n(0)$-invariants $e_n(\xx_n),  \dots, e_{n-k+1}(\xx_n)$ together with a h.s.o.p.
of degree $k$ carrying the defining representation of $H_n(0)$.
Algebraic properties of the quotient are controlled by 0-Hecke combinatorics of $k$-block ordered set 
partitions of $\{1, \dots, n\}$.
\end{itemize}
The novelty of this paper is that our mixed ideals consist of high degree invariants of different kinds: 
elementary and homogeneous.  It would be interesting to develop a more unified picture of 
the algebraic and combinatorial properties of mixed quotients of polynomial rings.

Our analysis of the rings $R_{n,k,r}$ will share many properties with the analyses of the previously
mentioned mixed quotients.  Since $I_{n,k,r}$ is not cut out by a  regular sequence of homogeneous
polynomials in $\QQ[\xx_n]$,
the usual commutative algebra tools (e.g. the Koszul complex) used to study the coinvariant algebra $R_n$
are unavailable to us.
These will be replaced by {\em combinatorial} commutative algebra tools (e.g. Gr\"obner theory).
We will see that the ideal $I_{n,k,r}$ has an explicit minimal Gr\"obner basis (with respect to the 
lexicographic term order) in terms of 
Demazure characters.  This Gr\"obner basis will yield the Hilbert series of $R_{n,k,r}$, as well
as an identification of its standard monomial basis.
The graded $S_n$-isomorphism type of $R_{n,k,r}$ will then be obtainable by 
constructing an appropriate short exact sequence to serve as a recursion.

The rest of the paper is organized as follows. 
In {\bf Section~\ref{Background}} we give background related to symmetric functions and Gr\"obner bases.
In {\bf Section~\ref{Hilbert}} we determine the Hilbert series of $R_{n,k,r}$ and calculate the 
standard monomial basis for $R_{n,k,r}$ with respect to the lexicographic term order.
In {\bf Section~\ref{Frobenius}} we determine the graded $S_n$-isomorphism type of $R_{n,k,r}$.
We also view $R_{n,k,r}$ as a module over the 0-Hecke algebra $H_n(0)$ and calculate its 
graded nonsymmetric and bigraded quasisymmetric 0-Hecke characteristics.
We close in {\bf Section~\ref{Open}} with some open problems.

\section{Background}
\label{Background}

\subsection{Words, partitions, and tableaux}
Let $w = w_1 \dots w_n$ be a word in the alphabet of nonnegative integers.
An index $1 \leq i \leq n-1$ is a {\em descent} of $w$ if $w_i > w_{i+1}$.  
The {\em descent set} of $w$ 
is $\Des(w) := \{1 \leq i \leq n-1 \,:\, w_i > w_{i+1} \}$ and the {\em major index}
of $w$ is $\maj(w) := \sum_{i \in \Des(w)} i$.  A pair of indices $1 \leq i < j \leq n$ is called an
{\em inversion} of $w$ if $w_i > w_j$; the {\em inversion number} $\inv(w)$ counts the inversions of $w$.
The word $w$ is called {\em $r$-tail positive} if its last $r$ letters $w_{n-r+1} \dots w_{n-1} w_n$ are positive.

Let $n \in \ZZ_{\geq 0}$.  A {\em partition} $\lambda$ of $n$ is a weakly decreasing sequence
$\lambda = (\lambda_1 \geq \cdots \geq \lambda_k)$ of positive integers with
$\lambda_1 + \cdots + \lambda_k = n$.  We write $\lambda \vdash n$ or $|\lambda| = n$ to indicate
 that $\lambda$ is a partition of $n$.
 The {\em Ferrers diagram} of $\lambda$ (in English notation) consists of $\lambda_i$ left-justified boxes in row
 $i$.  The Ferrers diagram of $(4,2,2) \vdash 8$ is shown below on the left.
 
 \begin{center}
 \begin{small}
 \begin{Young}
   & & & \cr
    & \cr
    &
 \end{Young}  \hspace{0.4in}
 \begin{Young}
 1 & 1 & 2 & 5 \cr
 2 & 3 \cr
 3 & 5
 \end{Young} \hspace{0.4in}
 \begin{Young}
 1 & 2 & 5 & 8 \cr
 3 & 4 \cr
 6 & 7
 \end{Young}
 \end{small}
 \end{center}

 Let $\lambda \vdash n$.  A {\em tableau} $T$ of shape $\lambda$ is a filling of the Ferrers diagram of $\lambda$
 with positive integers.  The tableau $T$ is called {\em semistandard} if its entires increase weakly across rows
 and strictly down columns.  The tableau $T$ is a {\em standard Young tableau} if it is semistandard and its
 entries consist of $1, 2, \dots, n$.  
 The tableau in the center above is semistandard and the tableau on the right above is standard.
 We let $\mathrm{shape}(T) = \lambda$ denote the {\em shape} of $T$ and
 let $\SYT(n)$ denote the collection of all standard Young tableaux with $n$ boxes.
 
 Given a standard tableau $T \in \SYT(n)$, an index $1 \leq i \leq n-1$ is a {\em descent} of $T$ if $i+1$
 appears in a lower row of $T$ than $i$.  Let $\Des(T)$ denote the set of descents of $T$ and let 
 $\maj(T) := \sum_{i \in \Des(T)} i$ be the {\em major index} of $T$.
 If $T$ is the standard tableau above we have $\Des(T) = \{2,5\}$ so that $\maj(T) = 2 + 5 = 7$.
 
 \subsection{Symmetric functions}
 Let $\Lambda$ denote the ring of symmetric functions in an infinite variable set $\xx = (x_1, x_2, \dots )$
 over the ground field $\QQ(q,t)$.
 The algebra $\Lambda$ is graded by degree: $\Lambda = \bigoplus_{n \geq 0} \Lambda_n$.
 The graded piece $\Lambda_n$ has dimension equal to the number of partitions $\lambda \vdash n$.
 
 The vector space $\Lambda_n$ has many interesting bases, all indexed by partitions of $n$.  
 Given $\lambda \vdash n$, let
 \begin{equation*}
 m_{\lambda}, \hspace{0.1in}
 e_{\lambda},  \hspace{0.1in}
 h_{\lambda},  \hspace{0.1in}
 p_{\lambda},  \hspace{0.1in}
 s_{\lambda},  \hspace{0.1in}
 \widetilde{H}_{\lambda}
 \end{equation*}
 denote the associated {\em monomial, elementary, homogeneous, power sum, Schur,} and 
 {\em modified Macdonald} symmetric function (respectively).
As $\lambda$ ranges over the collection of partitions of $n$, all of these form bases for the vector space 
$\Lambda_n$.

Let $f(\xx) \in \Lambda$ be a symmetric function.  We define an eigenoperator
$\Delta_f: \Lambda \rightarrow \Lambda$ for the modified Macdonald basis of $\Lambda$
as follows.
Given a partition $\lambda$, we set
\begin{equation}
\Delta_f(\widetilde{H}_{\lambda}) := f( \dots, q^{i-1} t^{j-1}, \dots ) \cdot \widetilde{H}_{\lambda},
\end{equation}
where $(i,j)$ ranges over all matrix coordinates of cells in the Ferrers diagram of $\lambda$.  
The reader familiar with plethysm will recognize this formula as
\begin{equation}
\Delta_f(\widetilde{H}_{\lambda}) := f[B_{\lambda}] \cdot \widetilde{H}_{\lambda}.
\end{equation}
For example, if $\lambda = (3,2) \vdash 5$, we fill the boxes of $\lambda$ with monomials 
\begin{center}
$\begin{Young}
1 & q & q^2 \cr
t & qt
\end{Young}$
\end{center}
and see that
\begin{equation*}
\Delta_f(\widetilde{H}_{(3,2)}) = f(1,q,q^2,t,qt) \cdot \widetilde{H}_{(3,2)}.
\end{equation*}
When $f = e_n$, the restriction 
of the delta operator $\Delta_{e_n}$ to the space $\Lambda_n$
of homogeneous degree $n$ symmetric functions is more commonly denoted $\nabla$:
\begin{equation}
\Delta_{e_n} \mid_{\Lambda_n} = \nabla.
\end{equation}
In particular, we have $\Delta_{e_n} e_n = \nabla e_n$.

Given a partition $\lambda \vdash n$, let $S^{\lambda}$ denote the associated irreducible representation
of the symmetric group $S_n$; for example, we have that $S^{(n)}$ is the trivial representation
and $S^{(1^n)}$ is the sign representation.
Given any finite-dimensional $S_n$-module $V$, there exist unique integers $c_{\lambda}$ such that
$V \cong_{S_n} \bigoplus_{\lambda \vdash n} c_{\lambda} S^{\lambda}$.  The {\em Frobenius character}
of $V$ is the symmetric function
\begin{equation}
\Frob(V) := \sum_{\lambda \vdash n} c_{\lambda} \cdot s_{\lambda}
\end{equation}
obtained by replacing the irreducible $S^{\lambda}$ with the Schur function $s_{\lambda}$.

If $V = \bigoplus_{d \geq 0} V_d$ is a graded vector space, the {\em Hilbert series} of $V$
is the power series
\begin{equation}
\Hilb(V;q) = \sum_{d \geq 0} \dim(V_d) \cdot q^d.
\end{equation}
Similarly, if $V = \bigoplus_{d \geq 0} V_d$ is a graded $S_n$-module, the 
{\em graded Frobenius character} of $V$ is 
\begin{equation}
\grFrob(V;q) = \sum_{d \geq 0} \Frob(V_d) \cdot q^d.
\end{equation}

\subsection{Quasisymmetric and nonsymmetric functions}
The space $\Lambda$ of symmetric functions has many generalizations; in this paper
we will also use the spaces $\QSym$ of quasisymmetric functions and $\NSym$ of 
noncommutative symmetric 
functions.  We briefly review their definition below, as well as their relationship with the 0-Hecke algebra
$H_n(0)$; for more details see \cite{Huang, HR}.

Let $n$ be a positive integer.
A (strong) composition $\alpha$ of $n$ is a sequence $(\alpha_1, \dots, \alpha_k)$ of positive integers
with $\alpha_1 + \cdots + \alpha_k = n$.  We write $\alpha \models n$ or $|\alpha| = n$ to indicate
 that $\alpha$ is a composition of $n$.
 The map 
 $\alpha = (\alpha_1, \dots, \alpha_k) \mapsto \{\alpha_1, \alpha_1 + \alpha_2, \dots , \alpha_1 + \cdots + \alpha_{k-1} \}$
 provides a bijection between compositions of $n$ and subsets of $[n-1]$; we will find it convenient
 to identify compositions with subsets.

 Let $S \subseteq [n-1]$ be a subset.  The {\em Gessel fundamental quasisymmetric function} $F_S = F_{S,n}$
 attached to $S$ is the degree $n$ formal power series 
 \begin{equation}
 F_S = F_{S,n} := \sum_{\substack{i_1 \leq i_2 \leq \cdots \leq i_n \\ j \in S \, \, \Rightarrow \, \, i_j < i_{j+1}}}
 x_{i_1} \cdots x_{i_n}.
 \end{equation}
 The space $\QSym$ of {\em quasisymmetric functions} is the $\QQ(q,t)$-algebra of formal 
 power series with basis given by
 $\{ F_{S,n} \,:\, n \geq 0, \, S \subseteq [n-1] \}$.
 If a subset $S \subseteq [n-1]$ corresponds to a composition $\alpha$, we set $F_{\alpha} := F_{S,n}$.
 
 For any composition $\alpha \models n$, define a symbol $\sss_{\alpha}$
(the {\em noncommutative ribbon Schur function}), formally defined to have homogeneous degree $n$.
Let $\NSym_n$ be the $2^{n-1}$-dimensional
$\QQ(q,t)$-vector space with basis $\{ \sss_{\alpha} \,:\, \alpha \models n \}$ and let 
$\NSym$ be the graded vector space $\NSym := \bigoplus_{n \geq 0} \NSym_n$.
 The space $\NSym$ is the space of {\em noncommutative symmetric functions}.
 Although there is more structure on $\NSym$ (and on $\QSym$) than the graded vector space
 structure (namely, they are dual graded Hopf algebras), only the vector space structure will be relevant in this paper.
 
 Let $\FF$ be an arbitrary field.
The {\em 0-Hecke algebra} $H_n(0)$ of rank $n$ over $\FF$ is the unital associative  $\FF$-algebra with 
generators $T_1, \dots, T_{n-1}$ and relations
\begin{equation}
\begin{cases}
T_i^2 = T_i & 1 \leq i \leq n-1 \\
T_i T_j = T_j T_i & |i-j| > 1 \\
T_i T_{i+1} T_i = T_{i+1} T_i T_{i+1} & 1 \leq i \leq n-2.
\end{cases}
\end{equation}

For all $1 \leq i \leq n-1$, let $s_i := (i,i+1) \in S_n$ be the corresponding adjacent transposition.  
Given a permutation $\pi \in S_n$, we define $T_{\pi} := T_{i_1} \cdots T_{i_k} \in H_n(0)$, where
$\pi = s_{i_1} \cdots s_{i_k}$ is a reduced (i.e., short as possible) expression for $\pi$ 
as a product of adjacent transpositions.  It can be shown that 
$\{T_{\pi} \,:\, \pi \in S_n \}$ is a $\FF$-basis for $H_n(0)$, so that $H_n(0)$ has dimension $n!$ as a 
$\FF$-vector space and may be viewed as a deformation of the group algebra $\FF[S_n]$.
The algebra $H_n(0)$ is not semisimple, even when the field $\FF$ has characteristic zero, so
its representation theory has a different flavor from that of $S_n$.

The indecomposable projective  representations of $H_n(0)$ are naturally labeled by 
compositions $\alpha \models n$ (see \cite{Huang, HR}).  For $\alpha \models n$, we let $P_{\alpha}$
denote the corresponding indecomposable projective and let
\begin{equation}
C_{\alpha} := \mathrm{top}(P_{\alpha}) = P_{\alpha}/\mathrm{rad}(P_{\alpha})
\end{equation}
be the corresponding irreducible $H_n(0)$-module.

The  Grothendieck group $G_0(H_n(0))$ is the $\ZZ$-module generated by all isomorphism classes 
$[V]$ of finite-dimensional $H_n(0)$-modules with a relation
$[V] - [U] - [W] = 0$ for every short exact sequence $0 \rightarrow U \rightarrow V \rightarrow W \rightarrow 0$
of $H_n(0)$-modules.  The $\ZZ$-module $G_0(H_n(0))$ is free with basis given by (isomorphism classes of)
the irreducibles $\{C_{\alpha} \,:\, \alpha \models n \}$.  The {\em quasisymmetric characteristic map}
$\Ch$ is defined on $G_0(H_n(0))$ by 
\begin{equation}
\Ch: C_{\alpha} \mapsto F_{\alpha}.
\end{equation}
If a $H_n(0)$-module $V$ has composition factors $C_{\alpha^{(1)}}, \dots, C_{\alpha^{(k)}}$,
then  $\Ch(V) = F_{\alpha^{(1)}} + \cdots + F_{\alpha^{(k)}}$.
Since $H_n(0)$ is not semisimple, the characteristic $\Ch(V)$ does {\em not} determine $V$ up to isomorphism.

Let $K_0(H_n(0))$ be the $\ZZ$-module generated by all isomorphism classes $[P]$ 
of finite-dimensional {\em projective} $H_n(0)$-modules with a relation
$[P] - [Q] - [R] = 0$ for every short exact sequence $0 \rightarrow Q \rightarrow P \rightarrow R \rightarrow 0$ 
of projective modules.  The $\ZZ$-module $K_0(H_n(0))$ is free with basis
given by (isomorphism classes of) the projective indecomposable 
$\{P_{\alpha} \,:\, \alpha \models n\}$.  The {\em noncommutative characteristic map}
$\ch$ is defined on $K_0(H_n(0))$ by
\begin{equation}
\ch: P_{\alpha} \mapsto \sss_{\alpha},
\end{equation}
This extends to give a noncommutative symmetric function $\ch(P)$ for any projective $H_n(0)$-module $P$.
The module $P$ is determined by $\ch(P)$ up to isomorphism.

There are graded refinements of the maps $\Ch$ and $\ch$.  Let $V = \bigoplus_{d \geq 0} V_d$ 
be a graded $H_n(0)$-module with each $V_d$ finite-dimensional. 
The {\em degree-graded quasisymmetric characteristic} is 
$\Ch_q(V) := \sum_{d \geq 0} \Ch(V_d) \cdot q^d$.
If each $V_d$ is projective, the {\em degree-graded noncommutative characteristic}
is 
$\ch_q(V) := \sum_{d \geq 0} \ch(V_d) \cdot q^d$.

The quasisymmetric characteristic $\Ch$ admits a bigraded refinement as follows.  
The 0-Hecke algebra $H_n(0)$ has a {\em length filtration} 
\begin{equation}
H_n(0)^{(0)} \subseteq H_n(0)^{(1)} \subseteq H_n(0)^{(2)} \subseteq \cdots
\end{equation}
where $H_n(0)^{(\ell)}$ is the subspace of $H_n(0)$ with $\FF$-basis
$\{T_{\pi} \,:\, \pi \in S_n, \inv(\pi) \leq \ell \}$.  If $V = H_n(0)v$ is a cyclic $H_n(0)$-module
with distinguished generator $v$, we get an induced length filtration of $V$ by
\begin{equation}
V^{(\ell)} := H_n(0)^{(\ell)} v.
\end{equation}
The {\em length-graded quasisymmetric characteristic} is given by
\begin{equation}
\Ch_t(V) := \sum_{\ell \geq 0} \Ch(V^{(\ell)} / V^{(\ell-1)}) \cdot t^{\ell}.
\end{equation}

Now suppose $V = \bigoplus_{d \geq 0} V_d$ is a graded $H_n(0)$-module which is also cyclic.
We get a bifiltration of $V$ consisting of the modules $V^{(\ell,d)} := V^{(\ell)} \cap V_d$ for $\ell, d \geq 0$.
The {\em length-degree-bigraded quasisymmetric characteristic} is
\begin{equation}
\Ch_{q,t}(V) := \sum_{\ell, d \geq 0} \Ch(V^{(\ell,d)} / (V^{(\ell-1,d)} + V^{(\ell,d+1)})) \cdot q^d t^{\ell}.
\end{equation}
More generally, if $V$ is a direct sum of graded cyclic $H_n(0)$-modules, we define $\Ch_{q,t}(V)$ by 
applying $\Ch_{q,t}$ to its cyclic summands.  This may depend on the cyclic decomposition of the module $V$.
\footnote{Our conventions for $q$ and $t$ in the definitions of $\ch_q$ and $\Ch_{q,t}$ are reversed 
with respect to those in \cite{Huang, HR} and elsewhere.  We make these conventions so as to be consistent
with the case of the graded Frobenius map on $S_n$-modules.}

\subsection{Gr\"obner theory}
A total order $<$ on the monomials in the polynomial ring $\QQ[\xx_n]$ is called a {\em term order} if 
\begin{itemize}
\item  we have $1 \leq m$ for all monomials $m$, and
\item  $m \leq m'$ implies $m \cdot m'' \leq m' \cdot m''$ for all monomials $m, m', m''$.
\end{itemize}
The  term order used in this paper is the {\em lexicographic} term order given by
$x_1^{a_1} \cdots x_n^{a_n} < x_1^{b_1} \cdots x_n^{b_n}$ if there exists an index $i$
with $a_1 = b_1, \dots, a_{i-1} = b_{i-1}$ and $a_i < b_i$.

If $<$ is any term order any $f \in \QQ[\xx_n]$ is any nonzero polynomial, let $\initial_<(f)$ be the leading 
(i.e., greatest) term
of $f$ with respect to the order $<$.  If $I \subseteq \QQ[\xx_n]$ is any ideal, the associated {\em initial ideal}
is 
\begin{equation}
\initial_<(I) := \langle \initial_<(f) \,:\, f \in I - \{0\} \rangle.
\end{equation}

A finite collection $G = \{g_1, \dots, g_r \}$ of nonzero polynomials in an ideal $I \subseteq \QQ[\xx_n]$
 is called a {\em Gr\"obner basis}
of $I$ if we have the equality of monomial ideals
\begin{equation}
\initial_<(I) = \langle \initial_<(g_1), \dots, \initial_<(g_r) \rangle.
\end{equation}
If $G$ is a Gr\"obner basis of $I$ it follows that $I = \langle G \rangle$.  A Gr\"obner basis 
$G = \{g_1, \dots, g_r\}$ is called {\em minimal} if the $<$-leading coefficient of each $g_i$ is $1$ and
$\initial(g_i) \nmid \initial(g_j)$ for all $i \neq j$.  A minimal Gr\"obner basis $G = \{g_1, \dots, g_r\}$
is called {\em reduced} if in addition, for all $i \neq j$, no term of $g_j$ is divisible by $\initial_<(g_i)$.
After fixing a term order, every ideal $I \subseteq \QQ[\xx_n]$ has a unique reduced Gr\"obner basis.

Let $I \subseteq \QQ[\xx_n]$ be an ideal and let $G$ be a Gr\"obner basis for $I$.  
The set of monomials in $\QQ[\xx_n]$ 
\begin{equation}
\{m \,:\, \initial_<(f) \nmid m \text{ for all $f \in I - \{0\}$} \} = \{ m \,:\, \initial_<(g) \nmid m \text{ for all $g \in G$} \}
\end{equation}
descends to a vector space basis for the quotient $\QQ[\xx_n]/I$.  This is called the {\em standard monomial basis};
it is completely determined by the ideal $I$ and the term order $<$.  If $I$ is a homogeneous ideal, the Hilbert
series of $\QQ[\xx_n]/I$ is given by
\begin{equation}
\Hilb(\QQ[\xx_n]/I; q) = \sum_m q^{\deg(m)},
\end{equation}
where the sum is over all monomials in the standard monomial basis.

\section{Hilbert series}
\label{Hilbert}

In this section we will derive the Hilbert series and ungraded isomorphism type of the module 
$R_{n,k,r}$.  The method that we use dates back to Garsia and Procesi in the context of Tanisaki ideals and 
quotients \cite{GP}.

Let $Y \subseteq \QQ^n$ be any finite set of points and consider the ideal
$\II(Y) \subseteq \QQ[\xx_n]$ of polynomials which vanish on $Y$.  That is, we have
\begin{equation}
\II(Y) = \{ f \in \QQ[\xx_n] \,:\, f(\yy) = 0 \text{ for all $\yy \in X$} \}.
\end{equation}
We may identify the quotient $\QQ[\xx_n]/\II(Y)$ with the collection of all (polynomial) functions
$Y \rightarrow \QQ$; since $Y$ is finite we have
\begin{equation}
|Y| = \dim(\QQ[\xx_n]/\II(Y)).
\end{equation}
If $Y$ is stable under the coordinate permutation action of $S_n$, we have the further identification
of $S_n$-modules
\begin{equation}
\QQ[Y] \cong_{S_n}  \QQ[\xx_n]/\II(Y).
\end{equation}

The ideal $\II(Y)$ is usually not homogeneous; we wish to replace it by a homogeneous ideal so that the associated
quotient is graded.  For any nonzero polynomial $f \in \II(X)$, write $f = f_d + \cdots f_1 + f_0$ where $f_i$
is homogeneous of degree $i$ and $f_d \neq 0$.  Let $\tau(f) = f_d$ be the top homogeneous component of $f$.
The ideal $\TT(Y) \subseteq \QQ[\xx_n]$ is given by
\begin{equation}
\TT(Y) := \langle \tau(f) \,:\, f \in \II(Y) - \{0\} \rangle.
\end{equation}
By construction the ideal $\TT(Y)$ is homogeneous, so that the quotient $\QQ[\xx_n]/\TT(Y)$ is graded.
Furthermore, we still have the dimension equality
\begin{equation}
|Y| = \dim(\QQ[\xx_n]/\II(Y)) = \dim(\QQ[\xx_n]/\TT(Y))
\end{equation}
and the $S_n$-module isomorphism
\begin{equation}
\QQ[Y] \cong_{S_n} \QQ[\xx_n]/\II(Y) \cong_{S_n} \QQ[\xx_n]/\TT(Y)
\end{equation}
whenever the point set $Y$ is $S_n$-stable.

The symmetric group $S_n$ acts on $S_{n,k,r}$ by permuting the positive letters $1, 2, \dots, n$.
We aim to prove that $R_{n,k,r} \cong \QQ[S_{n,k,r}]$ as ungraded $S_n$-modules. To do this,
our strategy is as follows.
\begin{enumerate}
\item  Find a point set $Y_{n,k,r} \subseteq \QQ^n$ which is stable under the action of $S_n$ 
such that there is a $S_n$-equivariant bijection from $Y_{n,k,r}$ to $S_{n,k,r}$.
\item  Prove that $I_{n,k,r} \subseteq \TT(Y_{n,k,r})$ by showing that the generators of $I_{n,k,r}$ arise as top
degree components of polynomials in $\II(Y_{n,k,r})$.
\item  Prove that 
\begin{equation*}
\dim(R_{n,k,r}) = \dim(\QQ[\xx_n]/I_{n,k,r}) \leq |S_{n,k,r}| = \dim(\QQ[\xx_n]/\TT(Y_{n,k,r}))
\end{equation*}
and use the relation $I_{n,k,r} \subseteq \TT(Y_{n,k,r})$ to conclude that $I_{n,k,r} = \TT(Y_{n,k,r})$.
\end{enumerate}

The point set $Y_{n,k,r}$ which accomplishes Step 1 is the following.

\begin{defn}
\label{y-point-set}
Fix $n+k$ distinct rational numbers $\alpha_1, \alpha_2, \dots, \alpha_{n+k} \in \QQ$.  Let 
$Y_{n,k,r}$ be the set of points $(y_1, y_2, \dots, y_n) \in \QQ^n$ such that
\begin{itemize}
\item  the coordinates $y_1, y_2, \dots, y_n$ are distinct and lie in $\{\alpha_1, \alpha_2, \dots, \alpha_{n+k} \}$, and
\item the numbers $\alpha_{n+k-r+1}, \dots, \alpha_{n+k-1}, \alpha_{n+k}$ all appear as coordinates
$(y_1, y_2 \dots, y_n)$.
\end{itemize}
\end{defn}

It is clear that $Y_{n,k,r}$ is stable under the action of $S_n$.  We have a natural identification
of $Y_{n,k,r}$ with permutations in $S_{n,k,r}$ given by letting a copy of $\alpha_i$ in position $j$
of $(y_1, \dots, y_n)$ correspond to the letter $j$ in position $i$ of the corresponding permutation 
in $S_{n,k,r}$.  
For example, if $(n,k,r) = (4,3,2)$ then
\begin{equation*}
(\alpha_7, \alpha_2, \alpha_4, \alpha_6) \leftrightarrow  0203041.
\end{equation*}
This bijection $Y_{n,k,r} \leftrightarrow S_{n,k,r}$ is clearly $S_n$-equivariant,
so Step 1 of our strategy is accomplished.
Step 2 of our strategy is achieved in the following lemma.

\begin{lemma}
\label{j-contained-in-t}
We have $I_{n,k,r} \subseteq \TT(Y_{n,k,r})$.
\end{lemma}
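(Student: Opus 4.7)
The plan is to show that each generator of $I_{n,k,r}$ arises as the top degree component $\tau(f)$ of some $f \in \II(Y_{n,k,r})$. The generators split into two families --- the homogeneous $h_{k+d}(\xx_n)$ for $1 \leq d \leq n$ and the elementary $e_{n-s}(\xx_n)$ for $0 \leq s \leq r-1$ --- and I would handle each by a separate generating-function or polynomial-division trick.

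For the homogeneous generators, I would exploit the identity
$$\left( \sum_{d \geq 0} h_d(y_1,\ldots,y_n)\, t^d \right) \cdot \prod_{i=1}^{n+k}(1 - \alpha_i t) \;=\; \prod_{\alpha_j \notin \{y_1,\ldots,y_n\}}(1 - \alpha_j t),$$
valid for every $\yy = (y_1,\ldots,y_n) \in Y_{n,k,r}$. The right-hand side is a polynomial in $t$ of degree exactly $k$, so the coefficient of $t^d$ on the left vanishes for all $d > k$, giving
$$\sum_{j=0}^{d}(-1)^j e_j(\alpha_1,\ldots,\alpha_{n+k})\, h_{d-j}(\yy) = 0.$$
Promoting $\yy$ to the variable vector $\xx_n$ yields a polynomial $f_d(\xx_n) \in \II(Y_{n,k,r})$ whose top degree component (the $j=0$ term) is $h_d(\xx_n)$. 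Ranging $d$ over $k+1, \ldots, k+n$ covers every homogeneous generator of $I_{n,k,r}$.

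For the elementary generators, the key observation is that each $\yy \in Y_{n,k,r}$ contains $\alpha_{n+k-r+1}, \ldots, \alpha_{n+k}$ among its coordinates, so $\prod_{j=1}^n(u - y_j)$ is divisible in $\QQ[u]$ by $E(u) := \prod_{i=n+k-r+1}^{n+k}(u - \alpha_i)$. Carrying out polynomial long division in $u$ inside $\QQ[\xx_n][u]$,
$$\prod_{j=1}^n(u - x_j) = E(u)\, Q(u,\xx_n) + R(u,\xx_n), \qquad \deg_u R < r,$$
and writing $R(u,\xx_n) = \sum_{s=0}^{r-1} r_s(\xx_n)\, u^s$, each coefficient $r_s$ vanishes on all of $Y_{n,k,r}$ and hence lies in $\II(Y_{n,k,r})$.

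The main obstacle is identifying the top $\xx$-degree component of each $r_s$. Since $E(u)$ has $\xx$-degree $0$, the division respects the $\xx$-grading, so I can analyze it component by component. The $\xx$-degree-$d$ piece of $\prod_j(u-x_j)$ is exactly $(-1)^d e_d(\xx_n)\, u^{n-d}$. When $d > n - r$ this piece already has $u$-degree less than $r$, so it descends to $R$ unchanged; when $d \leq n - r$ its contribution to $R$ is the remainder upon division by $E(u)$, which still has $\xx$-degree at most $n-r$. Therefore, for each $0 \leq s \leq r-1$, the top $\xx$-degree component of $r_s(\xx_n)$ is precisely $(-1)^{n-s} e_{n-s}(\xx_n)$, so $e_{n-s}(\xx_n) \in \TT(Y_{n,k,r})$. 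Combined with the previous step, this establishes $I_{n,k,r} \subseteq \TT(Y_{n,k,r})$.
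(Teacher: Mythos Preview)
Your proof is correct. The argument for the homogeneous generators $h_{k+d}(\xx_n)$ is identical to the paper's: both expand $\prod_i(1-\alpha_i t)\big/\prod_j(1-x_j t)$ and read off the vanishing of high-degree coefficients of~$t$.

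For the elementary generators, your approach differs in presentation but not in substance. The paper considers the rational function
\[
\frac{\prod_{j=1}^n(1-x_j t)}{\prod_{i=n+k-r+1}^{n+k}(1-\alpha_i t)}
= \sum_{i,j\geq 0}(-1)^i e_i(\xx_n)\,h_j(\alpha_{n+k-r+1},\dots,\alpha_{n+k})\,t^{i+j},
\]
notes that on $Y_{n,k,r}$ it collapses to a polynomial in $t$ of degree $n-r$, and extracts witnesses from the vanishing of the coefficients of $t^m$ for $m>n-r$. You instead perform polynomial long division of $\prod_j(u-x_j)$ by $E(u)=\prod_{i=n+k-r+1}^{n+k}(u-\alpha_i)$ in $\QQ[\xx_n][u]$ and use the remainder coefficients. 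The two are related by the substitution $t=1/u$ (after clearing denominators), and both produce polynomials in $\II(Y_{n,k,r})$ whose top $\xx$-degree part is $\pm e_m(\xx_n)$. Your grading argument---that $E(u)$ has $\xx$-degree zero, so long division respects the $\xx$-grading and the terms with $d>n-r$ pass to the remainder unchanged---is a clean way to identify the top component and avoids any appeal to formal power series; the paper's version is a shade more compact because one generating-function identity handles everything. Either route is fine.
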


\begin{proof}
We show that every generator of $I_{n,k,r}$ arises as the leading term of a polynomial in $\II(Y_{n,k,r})$.
We begin with the elementary symmetric function generators $e_{n-r+1}(\xx_n), \dots, e_{n-1}(\xx_n), e_n(\xx_n)$.
Consider the rational function in $t$ given by
\begin{equation}
\frac{(1 - x_1 t)(1 -  x_2 t) \cdots (1 -  x_n t)}{(1 - \alpha_{n+k-r+1} t) \cdots (1 - \alpha_{n+k-1} t) (1 - \alpha_{n+k} t)}
=  \sum_{i, j \geq 0} (-1)^i e_i(\xx_n) h_j(\alpha_{n+k-r+1}, \dots, \alpha_{n+k}) \cdot t^{i+j}.
\end{equation}
If $(x_1, \dots, x_n) \in Y_{n,k,r}$, the $r$  factors  in the denominator cancel  with $r$ factors in the numerator,
so that this rational expression is a polynomial in $t$ of degree $n-r$.  In particular, for $n-r+1 \leq m \leq r$ 
taking the coefficient
of $t^m$ on both sides gives
\begin{equation}
\sum_{i = 0}^m (-1)^i e_{m-i}(\xx_n) h_i(\alpha_{n+k-r+1}, \dots, \alpha_{n+k}) \in \II(Y_{n,k,r}),
\end{equation}
so that $e_m(\xx_n) \in \TT(Y_{n,k,r})$.

A similar trick shows that the homogeneous symmetric functions $h_{k+1}(\xx_n), \dots, h_{k+n}(\xx_n)$ lie
in $\TT(Y_{n,k,r})$.  Consider the rational function
\begin{equation}
\frac{(1 - \alpha_1 t)(1 - \alpha_2 t) \cdots (1 - \alpha_{n+k} t)}{(1 - x_1 t) (1 - x_2 t) \cdots (1 - x_n t)} =
\sum_{i,j \geq 0} (-1)^j  h_i(\xx_n)  e_j(\alpha_1, \dots, \alpha_{n+k}) t^{i+j}.
\end{equation}
If $(x_1, \dots, x_n) \in Y_{n,k,r}$ the $n$ factors in the denominator cancel with $n$ factors in the numerator,
giving a polynomial in $t$ of degree $k$.  For $m \geq k+1$, taking the coefficient of $t^m$ on both sides 
gives
\begin{equation}
\sum_{i = 0}^m (-1)^i h_{m-i}(\xx_n) e_i(\alpha_1, \dots, \alpha_{n+k}) \in \II(Y_{n,k,r}),
\end{equation}
so that $h_m(\xx_n) \in \TT(Y_{n,k,r})$.
\end{proof}

Step 3 of our strategy will take more work.  To begin, we identify a convenient collection of monomials 
in the initial ideal $\initial_<(I_{n,k,r})$ with respect to the lexicographic term order.  Given a subset 
$S = \{s_1 < \cdots < s_m\} = \subseteq [n]$ the corresponding {\em skip monomial}
$\xx(S)$ is given by
\begin{equation}
\xx(S) := x_{s_1}^{s_1} x_{s_2}^{s_2 - 1} \cdots x_{s_m}^{s_m-m+1}.
\end{equation}
In particular, if $n = 8$ we have $\xx(2458) = x_2^2 x_4^3 x_5^3 x_8^5$.

\begin{lemma}
\label{j-initial-ideal}
Let $<$ be the lexicographic term order on $\QQ[\xx_n]$.  If $S \subseteq [n]$ satisfies 
$|S| = n-r+1$ we have $\xx(S) \in \initial_<(I_{n,k,r})$.  Moreover, we have
$x_1^{k+1}, x_2^{k+2}, \dots, x_n^{k+n} \in \initial_<(I_{n,k,r})$.
\end{lemma}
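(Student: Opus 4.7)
The plan is to prove both assertions by exhibiting, for each listed monomial, an explicit element of $I_{n,k,r}$ whose lex-leading term is that monomial. The easier part concerns the pure powers $x_i^{k+i}$. I would use the generating-function factorization $\prod_{\ell=i}^n (1 - x_\ell t)^{-1} = \prod_{\ell=1}^{i-1}(1 - x_\ell t) \cdot \prod_{\ell=1}^n (1 - x_\ell t)^{-1}$, which upon extracting the coefficient of $t^{k+i}$ yields
\[
h_{k+i}(x_i, x_{i+1}, \ldots, x_n) \;=\; \sum_{j=0}^{i-1}(-1)^j\, e_j(x_1, \ldots, x_{i-1})\, h_{k+i-j}(\xx_n).
\]
Since $k+1 \leq k+i-j \leq k+i$ for $0 \leq j \leq i-1$, every $h_{k+i-j}(\xx_n)$ on the right is a defining generator of $I_{n,k,r}$; hence $h_{k+i}(x_i, \ldots, x_n) \in I_{n,k,r}$, and its lex-leading monomial is visibly $x_i^{k+i}$.

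The harder part concerns the skip monomials $\xx(S)$ with $|S| = m = n-r+1$. For each $S = \{s_1 < \cdots < s_m\}$ I would construct a polynomial $f_S \in I_{n,k,r}$ as a $\QQ[\xx_n]$-linear combination of the elementary generators $e_{n-r+1}(\xx_n), \ldots, e_n(\xx_n)$, beginning with the main term
\[
x_{s_1}^{s_1 - 1} x_{s_2}^{s_2 - 2} \cdots x_{s_m}^{s_m - m}\cdot e_{n-r+1}(\xx_n).
\]
Expanding $e_{n-r+1}(\xx_n)$ as a sum over $m$-element subsets $A \subseteq [n]$, the contribution from $A = S$ equals exactly $\xx(S)$; a direct exponent-by-exponent comparison then identifies the contributions from $A \neq S$ which are lex-larger than $\xx(S)$ as precisely those $A$ for which the smallest element of the symmetric difference $A \triangle S$ lies in $A \setminus S$. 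I would then cancel these unwanted lex-larger monomials by subtracting appropriate $\QQ[\xx_n]$-multiples of the remaining generators $e_{n-r+2}(\xx_n), \ldots, e_n(\xx_n)$, using the generating-function identity
\[
e_p(x_{s_1},\ldots,x_{s_m}) \;=\; \sum_{j=0}^{p} (-1)^j\, h_j(x_t:\,t\in T)\, e_{p-j}(\xx_n) \qquad (T = [n]\setminus S)
\]
that appears in the proof of Lemma~\ref{j-contained-in-t} as the combinatorial bridge matching spurious monomials to multiples of the elementary generators.

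The chief obstacle is verifying that this iterated cancellation converges cleanly: that only the $r$ available elementary generators are needed, that each step strictly decreases the lex-largest outstanding monomial, and that $\xx(S)$ ultimately survives as the sole leading term. I would handle this by a descending induction on the lex order of uncancelled monomials, tracking at each step the new (but strictly smaller) monomials introduced by the correction. Alternatively, one may aim for a closed-form description of $f_S$ as a Jacobi--Trudi-style determinant in the generators $e_{n-j}(\xx_n)$, bundling all cancellations into a single determinantal expansion; this is consistent with the introduction's remark that the Gr\"obner basis of $I_{n,k,r}$ admits a description in terms of Demazure characters.
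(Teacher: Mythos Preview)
Your treatment of the pure powers $x_i^{k+i}$ is correct and essentially matches the paper's: both show $h_{k+i}(x_i,\dots,x_n)\in I_{n,k,r}$, the paper via the one-step recursion $h_{m+1}(x_i,\dots,x_n)=x_i\,h_m(x_i,\dots,x_n)+h_{m+1}(x_{i+1},\dots,x_n)$, and you via the equivalent closed-form expansion in $e_j(x_1,\dots,x_{i-1})\,h_{k+i-j}(\xx_n)$.

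For the skip monomials the paper gives no self-contained argument at all: it simply cites \cite[Lem.~3.4, Eqn.~3.5, Lem.~3.5]{HRS}, where the desired polynomial is identified explicitly as the variable-reversed Demazure character $\kappa_{\gamma(S)^*}(\xx_n^*)$, and its ideal membership and lex-leading term are verified separately. Your direct-construction strategy (start with a monomial multiple of $e_{n-r+1}(\xx_n)$ and correct with multiples of $e_{n-r+2},\dots,e_n$) is a reasonable alternative route, and your hunch that a closed Jacobi--Trudi/Demazure form exists is exactly right. However, as written the sketch has a gap: the identity you invoke,
\[
e_p(x_{s_1},\dots,x_{s_m})=\sum_{j=0}^{p}(-1)^j\,h_j(x_t:t\in T)\,e_{p-j}(\xx_n),
\]
expresses $e_p(S)$ as a combination of \emph{all} $e_j(\xx_n)$ with $0\le j\le p$, not merely the generators $e_{n-r+1},\dots,e_n$ that actually lie in $I_{n,k,r}$; so it does not by itself supply the ``bridge'' needed to cancel the unwanted terms while staying inside the ideal. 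To complete your approach you would need either to argue that the low-degree $e_j$ contributions ultimately telescope away, or to write down the determinantal/Demazure closed form and check both claims for it directly---which is precisely what the cited HRS lemmas do.
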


\begin{proof}
The
assertion regarding skip monomials 
comes from combining \cite[Lem. 3.4]{HRS} (and in particular \cite[Eqn. 3.5]{HRS})
and \cite[Lem. 3.5]{HRS}.
To prove the second assertion, the identities 
\begin{equation}
h_{m+1}(x_i, x_{i+1}, \dots, x_n) - x_i \cdot h_m(x_i, x_{i+1}, \dots, x_n)  = h_{m+1}(x_{i+1}, \dots, x_n)
\end{equation}
(for $1 \leq i \leq n$ and $m \geq 0$) imply that
\begin{equation}
h_{k+1}(x_1, \dots, x_n), h_{k+2}(x_2, \dots, x_n), \dots, h_{k+n}(x_n) \in I_{n,k,r},
\end{equation}
so that $x_1^{k+1}, x_2^{k+2}, \dots, x_n^{k+n} \in \initial_<(I_{n,k,r})$.
\end{proof}

The initial terms provided by Lemma~\ref{j-initial-ideal} will be all we need.  We name the 
monomials $m \in \QQ[\xx_n]$ which are not divisible by any of these initial terms as follows.

\begin{defn}
\label{good-monomials}
A monomial $m \in \QQ[\xx_n]$ is {\em $(n,k,r)$-good} if 
\begin{itemize}
\item we have $\xx(S) \nmid m$ for all $S \subseteq [n]$ with $|S| = n-r+1$, and
\item we have $x_i^{k+i} \nmid m$ for all $1 \leq i \leq n$.
\end{itemize}
Let $\MMM_{n,k,r}$ denote the set of all $(n,k,r)$-good monomials.
\end{defn}

By Lemma~\ref{j-initial-ideal},
the monomials in $\MMM_{n,k,r}$ contain the standard monomial basis of $R_{n,k,r}$,
and so descend to a spanning set of $R_{n,k,r}$.  We will see that $\MMM_{n,k,r}$
is in fact that standard monomial basis of $R_{n,k,r}$.
We will do this using the following combinatorial result.

\begin{lemma}
\label{good-monomial-injection}
There is an injection $\Psi: S_{n,k,r} \rightarrow \MMM_{n,k,r}$ with the property that 
$\deg(\Psi(\pi)) = \inv(\pi)$ for all $\pi \in S_{n,k,r}$.
\end{lemma}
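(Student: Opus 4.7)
My plan is to define $\Psi$ explicitly by a Lehmer-style inversion code. For $\pi \in S_{n,k,r}$ and $i \in [n]$, let $p_\pi(i)$ denote the position of the letter $i$ in $\pi$, and set
\[
c_i(\pi) := \bigl|\{\ell > p_\pi(i) : \pi_\ell < i\}\bigr|,
\]
the number of entries of $\pi$ (zero or positive) that are strictly smaller than $i$ and appear to the right of $i$. I take $\Psi(\pi) := \prod_{i=1}^n x_i^{c_i(\pi)}$. The degree identity $\deg \Psi(\pi) = \inv(\pi)$ is immediate, since every inversion pair $(j,\ell)$ with $j < \ell$ and $\pi_j > \pi_\ell$ is counted exactly once, namely when $i = \pi_j$.

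Injectivity of $\Psi$ follows from the standard reconstruction of a word from its inversion code: starting with $0^k$ and inserting $1, 2, \dots, n$ in order, the exponent $c_i(\pi)$ uniquely prescribes where to place $i$ among the $k + (i-1)$ already-placed smaller letters. The same bookkeeping yields the bound $c_i(\pi) \le k + i - 1$, which handles the first half of the definition of $\MMM_{n,k,r}$, namely $x_i^{k+i} \nmid \Psi(\pi)$.

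The substantive work is showing that no skip monomial $\xx(S)$ with $|S| = n - r + 1$ divides $\Psi(\pi)$. I would argue by induction on $n$, removing the letter $n$ from $\pi$. Writing $p := p_\pi(n)$ and $c_n := n + k - p$, there are two cases: in Case A, $n$ lies in the $r$-tail of $\pi$ (equivalently $c_n \le r - 1$), and the deletion $\pi'$ lies in $S_{n-1,k,r-1}$; in Case B, $c_n \ge r$ and $\pi' \in S_{n-1,k,r}$. In both, $c_i(\pi) = c_i(\pi')$ for $i < n$, so $\Psi(\pi) = x_n^{c_n}\,\Psi(\pi')$ with $\Psi(\pi') \in \MMM_{n-1,k,r'}$ by the inductive hypothesis. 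Fix now $S \subseteq [n]$ with $|S| = n - r + 1$. If $n \in S$, write $S = S' \sqcup \{n\}$ to obtain $\xx(S) = x_n^{r} \cdot \xx(S')$ with $|S'| = n - r$: Case A is killed immediately by $c_n < r$, while Case B reduces to $\xx(S') \nmid \Psi(\pi')$, which is the inductive skip condition for $\MMM_{n-1,k,r}$. If $n \notin S$, then divisibility $\xx(S) \mid \Psi(\pi)$ is equivalent to $\xx(S) \mid \Psi(\pi')$: in Case A this is directly forbidden by the inductive hypothesis, since $(n-1) - (r-1) + 1 = n - r + 1 = |S|$; in Case B, I invoke the downward-closure observation $\xx(S \setminus \{\max S\}) \mid \xx(S)$ to reduce the forbidden size from $n - r + 1$ to the inductively forbidden $n - r$ for $\MMM_{n-1,k,r}$. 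The main obstacle is exactly this final case analysis --- in particular the non-tail subcase of Case B --- and the downward-closure trick is the clean way to reconcile the mismatch between the skip-forbidden sizes on the two sides of the induction.
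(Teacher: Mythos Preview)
Your map $\Psi$ is exactly the inversion code the paper uses, and your treatment of injectivity, the degree identity, and the bound $c_i(\pi)\le k+i-1$ matches the paper essentially verbatim. The difference is in how the skip-monomial condition is verified.

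The paper gives a short direct argument: assuming $\xx(S)\mid\Psi(\pi)$ for some $S=\{s_1<\cdots<s_{n-r+1}\}$, it shows by an easy induction on $i$ that each $s_i$ must lie \emph{outside} the $r$-tail $T$ of $\pi$ (since membership in $T$ caps $c_{s_i}$ at $s_i-i$, contradicting $c_{s_i}\ge s_i-i+1$). Hence $S\cap T=\varnothing$, which is impossible because $|S|+|T|=n+1>n$.

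Your inductive argument on $n$, deleting the letter $n$, is a genuine alternative. I checked the case split carefully and it is correct: in particular your ``downward-closure'' observation $\xx(S\setminus\{\max S\})\mid \xx(S)$ holds because removing the largest element leaves all remaining exponents unchanged, and this is exactly what bridges the size mismatch ($n-r+1$ versus $(n-1)-r+1$) in the Case~B / $n\notin S$ subcase. The one thing you omit is a base case, but $n=0$ (or $r=n$, where only Case~A occurs and one unwinds to the classical Artin situation) is trivial. So the argument is complete.

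In terms of trade-offs: the paper's argument is shorter and more transparent---it identifies in one stroke the combinatorial meaning of the skip condition (namely, that $S$ must avoid the tail). Your induction is perfectly sound but hides this interpretation behind the case analysis, and the downward-closure trick, while correct, is a somewhat ad hoc patch. On the other hand, your approach has the virtue of setting up a recursion $S_{n,k,r}\to S_{n-1,k,r}\sqcup S_{n-1,k,r-1}$ that mirrors the $q$-Pascal structure appearing later in the paper, so it is not without conceptual merit.
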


It will develop that the map $\Psi$ of Lemma~\ref{good-monomial-injection} is actually a bijection.

\begin{proof}
The map $\Psi$ will essentially be the inversion code.  Let $\pi = \pi_1 \dots \pi_{n+k} \in S_{n,k,r}$
be a $r$-tail positive permutation of the multiset $\{0^k, 1, 2, \dots, n\}$.  The {\em code} of $\pi$ is the sequence
$(c_1, \dots, c_n)$ where
\begin{equation}
c_i = \text{the number of letters $0, 1, 2, \dots, i-1$ to the right of $i$ in $\pi$}.
\end{equation}
For example, if $\pi = 40130052$ the code is $(c_1, c_2, c_3, c_4, c_5) = (2,0,3,6,1)$.  
It is clear that the sum of the code of $\pi$ gives the inversion number $\inv(\pi)$.  If $\pi \in S_{n,k,r}$ has 
code $(c_1, \dots, c_n)$, we define $\Psi(\pi) := x_1^{c_1} \cdots x_n^{c_n}$.

We argue that $\Psi$ is a well defined function $S_{n,k,r} \rightarrow \MMM_{n,k,r}$, that is, we have
$\Psi(\pi) \in \MMM_{n,k,r}$ for all $\pi \in S_{n,k,r}$.  Let $\pi \in S_{n,k,r}$ have code
$(c_1, \dots, c_n)$.  Since $\pi$ contains $k$ copies of $0$, it is clear that $c_i < k+i$ for all $1 \leq i \leq n$, so that
$x_i^{k+i} \nmid \Psi(\pi)$ for all $1 \leq i \leq n$.

Now let $S = \{s_1 < \cdots < s_{n-r+1} \} \subseteq [n]$ and suppose $\xx(S) \mid \Psi(\pi)$.
This means that $c_{s_i} \geq s_i - i + 1$ for all $1 \leq i \leq n-r+1$.  
Let $T = \{ \pi_{n+k-r+1}, \dots, \pi_{n+k-1}, \pi_{n+k} \}$ be the $r$-tail of $\pi$; since $\pi \in S_{n,k,r}$
the set $T$ consists of $r$ positive numbers.   We argue that $S \cap T = \emptyset$ as follows.
\begin{itemize}
\item
If $s_1 \in T$ we would have $c_{s_1} \leq s_1 - 1$ (since $s_1$ could form inversions with only 
$1, 2, \dots, s_1-1$), contradicting the inequality $c_{s_1} \geq s_1$.  We conclude that $s_1 \notin T$.
\item 
If $s_1, \dots, s_{i-1} \notin T$ and $s_i \in T$, we would have $c_{s_i} \leq s_i - i$ 
(since $s_i$ can only form inversions with those letters in $1, 2, \dots, s_i - 1$ which lie in $T$), 
contradicting the inequality $c_{s_i} \geq s_i - 1 + 1$.  We conclude that $s_i \notin T$.  
\end{itemize}
Induction gives the result that $S \cap T = \emptyset$.  However, this contradicts the facts that 
$|S| = n-r+1$, $|T| = r$, and that there are a total of $n$ positive letters in $\pi$.  This concludes the 
proof that the map $\Psi: S_{n,k,r} \rightarrow \MMM_{n,k,r}$ is well defined.

The relation $\deg(\Psi(\pi)) = \inv(\pi)$ is clear from construction.
The fact that $\Psi$ is an injection is equivalent to the fact that a permutation $\pi = \pi_1 \dots \pi_{n+k} \in S_{n,k,r}$
is determined by its code $(c_1, \dots, c_n)$.  This assertion is true more broadly
for any permutation of the multiset
$\{0^k, 1, 2, \dots, n\}$ (whether or not it is $r$-tail positive); we leave the verification to the reader.
\end{proof}

We are ready to derive the Hilbert series of $R_{n,k,r}$.

\begin{theorem}
\label{hilbert-series-theorem}
Endow monomials in $\QQ[\xx_n]$ with the lexicographic term order.  The standard monomial
basis of $R_{n,k,r}$ is $\MMM_{n,k,r}$.
The Hilbert series of $R_{n,k,r}$ is given by
\begin{equation}
\Hilb(R_{n,k,r}; q) = {n+k-r \brack k}_q \cdot [n]!_q.
\end{equation}
\end{theorem}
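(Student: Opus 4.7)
The plan is a dimension-squeeze in the style of Garsia-Procesi, combining Lemmas~\ref{j-contained-in-t}, \ref{j-initial-ideal}, and \ref{good-monomial-injection} into two bounds on $\dim R_{n,k,r}$ that meet at $|S_{n,k,r}|$. First, Lemma~\ref{j-initial-ideal} shows $\initial_<(I_{n,k,r})$ contains every skip monomial $\xx(S)$ for $|S| = n-r+1$ and every power $x_i^{k+i}$, so the standard monomial basis of $R_{n,k,r}$ is contained in $\MMM_{n,k,r}$, yielding $\dim R_{n,k,r} \leq |\MMM_{n,k,r}|$ and coefficient-wise $\Hilb(R_{n,k,r}; q) \leq \sum_{m \in \MMM_{n,k,r}} q^{\deg m}$. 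Second, Lemma~\ref{j-contained-in-t} gives $I_{n,k,r} \subseteq \TT(Y_{n,k,r})$, so $\dim R_{n,k,r} \geq \dim \QQ[\xx_n]/\TT(Y_{n,k,r})$, and the Garsia-Procesi top-component principle identifies this with $|Y_{n,k,r}| = |S_{n,k,r}|$ via the bijection $Y_{n,k,r} \leftrightarrow S_{n,k,r}$ from Definition~\ref{y-point-set}.

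The decisive step is to upgrade the injection $\Psi$ of Lemma~\ref{good-monomial-injection} to a bijection by running its argument in reverse: given $(a_1,\ldots,a_n)$ satisfying the goodness conditions, the unique multiset permutation $\pi$ of $\{0^k,1,\ldots,n\}$ with this code must be $r$-tail positive. For the contrapositive, suppose $\pi$ is not $r$-tail positive and let $T' \subseteq [n]$ be the positive letters in its last $r$ entries, so $|T'| \leq r - 1$ and $[n]\setminus T'$ has size at least $n-r+1$. For any $(n-r+1)$-subset $S'' = \{s_1 < \cdots < s_{n-r+1}\} \subseteq [n] \setminus T'$, each $s_i$ occupies one of the first $n+k-r$ positions, and the last $r$ positions (all to the right of $s_i$) are forced to contain $r - |T'|$ zeros together with every letter of $T' \cap [s_i - 1]$; these all count toward $c_{s_i} = a_{s_i}$. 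The rank identity $i + |[s_i]\cap T'| + |[s_i] \cap (([n]\setminus T') \setminus S'')| = s_i$ (where the three summands partition $[s_i]$, using $s_i \notin T'$) then converts this lower bound into $a_{s_i} \geq s_i - i + 1$, so $\xx(S'')$ divides $x_1^{a_1}\cdots x_n^{a_n}$ and the monomial is not good, a contradiction.

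Combining the three steps yields $|S_{n,k,r}| \leq \dim R_{n,k,r} \leq |\MMM_{n,k,r}| = |S_{n,k,r}|$, so all inequalities collapse into equalities and $\MMM_{n,k,r}$ is the standard monomial basis. Since $\Psi$ is a degree-preserving bijection,
\begin{equation*}
\Hilb(R_{n,k,r}; q) = \sum_{m \in \MMM_{n,k,r}} q^{\deg m} = \sum_{\pi \in S_{n,k,r}} q^{\inv(\pi)},
\end{equation*}
and the final sum evaluates to ${n+k-r \brack k}_q \cdot [n]!_q$ by decomposing each $r$-tail positive word into its arrangement of the positive letters $1,\ldots,n$ (contributing $[n]!_q$ from inversions among positives) and the choice of which $k$ of the first $n+k-r$ positions hold the zeros (contributing ${n+k-r \brack k}_q$ from inversions between positives and later zeros, via the identity $\sum_{1 \leq i_1 < \cdots < i_k \leq m} q^{i_1 + \cdots + i_k} = q^{\binom{k+1}{2}} {m \brack k}_q$). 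The main obstacle is the surjectivity of $\Psi$: verifying $a_{s_i} \geq s_i - i + 1$ requires the rank identity above and a careful accounting of the letters smaller than $s_i$ forced into the $r$-tail under the non-$r$-tail-positive hypothesis; everything else is a direct application of the stated lemmas and standard $q$-binomial manipulations.
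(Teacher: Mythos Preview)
Your proof is correct and follows the same Garsia--Procesi dimension-squeeze as the paper, combining Lemmas~\ref{j-contained-in-t}, \ref{j-initial-ideal}, and \ref{good-monomial-injection} in the same order to obtain $|S_{n,k,r}| \leq \dim R_{n,k,r} \leq |\MMM_{n,k,r}|$ and then force equality.

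The one substantive difference is that you supply a direct combinatorial proof that $\Psi$ is \emph{surjective}, whereas the paper deduces bijectivity of $\Psi$ only as a consequence of the squeeze. In fact your addition repairs a slip in the paper's write-up: the sentence ``Lemma~\ref{good-monomial-injection} gives the relation $|\MMM_{n,k,r}| \leq |S_{n,k,r}|$'' states the reverse of what the injection $\Psi: S_{n,k,r} \to \MMM_{n,k,r}$ actually yields, so the squeeze as written does not formally close without an independent upper bound on $|\MMM_{n,k,r}|$. Your contrapositive argument supplies exactly that. It is correct but slightly compressed at the last step: after the rank identity $s_i = i + |[s_i]\cap T'| + |[s_i]\cap R|$ with $R = ([n]\setminus T')\setminus S''$, one still needs the observation $|R| = (n-|T'|)-(n-r+1) = r-1-|T'|$, whence
\[
a_{s_i} \;\geq\; (r-|T'|) + |T'\cap [s_i]| \;=\; (r-|T'|) + (s_i - i - |[s_i]\cap R|) \;\geq\; (r-|T'|) + (s_i-i) - (r-1-|T'|) \;=\; s_i - i + 1,
\]
which is the skip-monomial divisibility you want. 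With that line made explicit, your argument is complete and arguably cleaner than the paper's.
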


\begin{proof}
Let $\BBB^{\TT}$ be the standard monomial
basis of $\QQ[\xx_n]/\TT(Y_{n,k,r})$ and let $\BBB^J$ be the standard monomial basis of 
$R_{n,k,r} = \QQ[\xx_n]/I_{n,k,r}$.
We know that $|S_{n,k,r}| = |\BBB^{\TT}|$.
Lemma~\ref{j-contained-in-t} implies that $\BBB^{\TT} \subseteq \BBB^J$.
Lemma~\ref{j-initial-ideal} further implies the containment $\BBB^J \subseteq \MMM_{n,k,r}$.
Finally, Lemma~\ref{good-monomial-injection} gives the relation 
$|\MMM_{n,k,r}| \leq |S_{n,k,r}|$.  Putting these facts together gives 
\begin{equation}
\BBB^{\TT} = \BBB^J = \MMM_{n,k,r},
\end{equation}
and the fact that all of these sets have size $|S_{n,k,r}|$.
In particular, the standard monomial basis of $R_{n,k,r}$ is $\MMM_{n,k,r}$.

By the last paragraph, the map $\Psi$ of Lemma~\ref{good-monomial-injection} is a bijection.
It follows that 
\begin{equation}
\Hilb(R_{n,k,r};q) = \sum_{m \in \MMM_{n,k,r}} q^{\deg(m)} = \sum_{\pi \in S_{n,k,r}} q^{\inv(\pi)}.
\end{equation}
It is well known that $\sum_{\pi \in S_n} q^{\inv(\pi)} = [n]!_q$.  The $q$-binomial coefficient in
\begin{equation}
\sum_{\pi \in S_{n,k,r}} q^{\inv(\pi)} =  {n+k-r \brack k}_q \cdot [n]!_q
\end{equation}
comes from the ways of inserting $k$ copies of $0$ among the first $n-r$ letters of a permutation in 
$S_n$.
\end{proof}

We can also derive the ungraded $S_n$-isomorphism type of the quotient 
$R_{n,k,r}$.

\begin{corollary}
\label{ungraded-isomorphism-type}
As an {\em ungraded} $S_n$-module we have
$R_{n,k,r} \cong_{S_n} \QQ[S_{n,k,r}]$.
\end{corollary}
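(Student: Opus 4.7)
The plan is to observe that essentially all the work has already been done in the proof of Theorem~\ref{hilbert-series-theorem}, and the corollary falls out as a byproduct. The strategy laid out before Definition~\ref{y-point-set} identifies three isomorphisms of $S_n$-modules that we want to chain together:
\begin{equation*}
\QQ[S_{n,k,r}] \;\cong_{S_n}\; \QQ[Y_{n,k,r}] \;\cong_{S_n}\; \QQ[\xx_n]/\II(Y_{n,k,r}) \;\cong_{S_n}\; \QQ[\xx_n]/\TT(Y_{n,k,r}).
\end{equation*}
The first isomorphism is the $S_n$-equivariant bijection $Y_{n,k,r} \leftrightarrow S_{n,k,r}$ noted immediately after Definition~\ref{y-point-set}. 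The second isomorphism is the standard identification of the coordinate ring of a finite point set as functions on that set. The third comes from passing to top degree components, which preserves the $S_n$-module structure because $S_n$ acts by degree-preserving linear maps, so the operator $\tau$ commutes with the action and hence $\TT(Y_{n,k,r})$ is $S_n$-stable and the ungraded isomorphism type is unchanged.

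So the only remaining step is to identify $R_{n,k,r}$ with $\QQ[\xx_n]/\TT(Y_{n,k,r})$ as an $S_n$-module, and for this it suffices to show the equality of ideals $I_{n,k,r} = \TT(Y_{n,k,r})$. First, I would invoke Lemma~\ref{j-contained-in-t} for the containment $I_{n,k,r} \subseteq \TT(Y_{n,k,r})$. Second, I would pull out from the proof of Theorem~\ref{hilbert-series-theorem} the conclusion $\BBB^{\TT} = \BBB^{J}$, which says that the standard monomial bases (with respect to the lexicographic term order) of the two quotients coincide. Together with the containment $I_{n,k,r} \subseteq \TT(Y_{n,k,r})$, this forces the two ideals to be equal: both are homogeneous (once we note that $\TT(Y_{n,k,r})$ is homogeneous by construction) with the same initial ideal under $<$ and one contained in the other, hence they agree.

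Chaining the four isomorphisms then yields $R_{n,k,r} = \QQ[\xx_n]/I_{n,k,r} \cong_{S_n} \QQ[S_{n,k,r}]$, which is exactly the statement of the corollary. There is no real obstacle here since every ingredient has been established; the only thing to be slightly careful about is remembering that even though $\TT(Y_{n,k,r})$ is defined by passing to top components (a non-$S_n$-equivariant operation on individual polynomials), the resulting ideal is $S_n$-stable and the quotient is $S_n$-isomorphic to $\QQ[Y_{n,k,r}]$ as an \emph{ungraded} module, which is all we need.
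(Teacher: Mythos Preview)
Your proposal is correct and follows essentially the same route as the paper: invoke Lemma~\ref{j-contained-in-t} for the containment $I_{n,k,r}\subseteq\TT(Y_{n,k,r})$, use the equality $\BBB^{\TT}=\BBB^{J}$ established in the proof of Theorem~\ref{hilbert-series-theorem} to upgrade this to $I_{n,k,r}=\TT(Y_{n,k,r})$, and then chain the standard isomorphisms through $\QQ[Y_{n,k,r}]$. One small inaccuracy worth fixing: taking the top homogeneous component $\tau$ \emph{is} $S_n$-equivariant on individual polynomials (since $S_n$ acts by invertible degree-preserving maps, $\tau(\pi.f)=\pi.\tau(f)$), so your parenthetical to the contrary is not needed and in fact makes the $S_n$-stability of $\TT(Y_{n,k,r})$ immediate.
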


\begin{proof}
Lemma~\ref{j-contained-in-t} and Theorem~\ref{hilbert-series-theorem} give the isomorphisms
\begin{equation}
R_{n,k,r} \cong \QQ[\xx_n]/\TT(Y_{n,k,r}) \cong \QQ[\xx_n]/\II(Y_{n,k,r}) \cong \QQ[S_{n,k,r}]
\end{equation}
of ungraded $S_n$-modules.
\end{proof}

We describe a minimal Gr\"obner basis for the ideal $I_{n,k,r}$.  Given a subset 
$S = \{s_1 < \dots < s_m\} \subseteq [n]$, let $\gamma(S) = (\gamma(S)_1, \dots, \gamma(S)_n)$
be the length $n$ {\em skip vector} of nonnegative integers given by
\begin{equation}
\gamma(S)_i = \begin{cases}
s_j - j + 1 & i = s_j \\
0 & i \notin S.
\end{cases}
\end{equation}
Let $\gamma(S)^* = (\gamma(S)_n, \dots, \gamma(S)_1)$ be the reversal of the vector $\gamma(S)$.
If $\gamma = (\gamma_1, \dots, \gamma_n)$ is any length $n$ vector of nonnegative integers, let 
$\kappa_{\gamma}(\xx_n) \in \QQ[\xx_n]$ be the associated {\em Demazure character} 
(see \cite[Sec. 2]{HRS} for its definition).  Finally, if $f(\xx_n) \in \QQ[\xx_n]$ is any polynomial, let 
$f(\xx_n^*)$ be the polynomial obtained by reversing the variables in $f(\xx_n)$ so that
\begin{equation}
f(\xx_n^*) = f(x_n, x_{n-1}, \dots, x_1).
\end{equation}

\begin{corollary}
\label{groebner-basis-corollary}
Endow monomials in $\QQ[\xx_n]$ with the lexicographic term order.
A  Gr\"obner basis for the ideal $I_{n,k,r}$ consists of the polynomials
\begin{equation*}
h_{k+1}(x_1, x_2, \dots, x_n), h_{k+2}(x_2, \dots, x_n), \dots, h_{k+n}(x_n)
\end{equation*}
together with the polynomials
\begin{equation*}
\kappa_{\gamma(S)^*}(\xx_n^*),
\end{equation*}
where $S$ ranges over all $n-r+1$-element subsets of $[n]$. When $r < n$ and $k > 0$ this 
Gr\"obner basis is minimal.
\end{corollary}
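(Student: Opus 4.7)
The plan is to combine Theorem~\ref{hilbert-series-theorem} (which pins down the initial ideal) with explicit polynomials in $I_{n,k,r}$ whose lex-leading terms generate that initial ideal. Since $\MMM_{n,k,r}$ is the standard monomial basis of $R_{n,k,r}$, the initial ideal $\initial_<(I_{n,k,r})$ is, as a monomial ideal, generated by
$$\{x_i^{k+i} : 1 \leq i \leq n\} \cup \{\xx(S) : S \subseteq [n], \, |S| = n-r+1\}.$$
Hence it suffices to show that each of the listed polynomials lies in $I_{n,k,r}$ and has lex-leading term (with coefficient $1$) equal to one of these generators, and that every generator arises this way.

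For the first family $h_{k+i}(x_i, \dots, x_n)$, both properties were essentially established during the proof of Lemma~\ref{j-initial-ideal}: the telescoping identity $h_{m+1}(x_i, \dots, x_n) = x_i \cdot h_m(x_i, \dots, x_n) + h_{m+1}(x_{i+1}, \dots, x_n)$ gives the containment in $I_{n,k,r}$ by descending induction on $i$ (starting from the generator $h_{k+1}(\xx_n) \in I_{n,k,r}$), and the lex-leading term of $h_{k+i}(x_i,\dots,x_n)$ is visibly $x_i^{k+i}$. For the second family I would invoke \cite[Lem.~3.4 and Lem.~3.5]{HRS}, which construct $\kappa_{\gamma(S)^*}(\xx_n^*)$ as an explicit $\QQ[\xx_n]$-linear combination of the elementary symmetric functions $e_{n-r+1}(\xx_n), \dots, e_n(\xx_n)$ and identify its lex-leading term as $\xx(S)$.

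The main obstacle is making sure the Demazure-character construction from \cite{HRS} transports cleanly to our setting, since the ideal studied there is different. What makes this manageable is that the cited argument uses \emph{only} the $e$-generators (shared with $I_{n,k,r}$) and does not rely on the other generators of the \cite{HRS} ideal; thus the formulas in \cite[Eqn.~3.5]{HRS} and \cite[Lem.~3.5]{HRS} yield the required statement after relabeling their parameter as $r$.

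For minimality when $r < n$ and $k > 0$, all leading coefficients are $1$ by construction, so we only need that no leading term divides another. Two pure powers $x_i^{k+i}$ and $x_j^{k+j}$ with $i \neq j$ involve disjoint variables. Two distinct skip monomials $\xx(S) \neq \xx(S')$ are incomparable under divisibility, because $\xx(S) \mid \xx(S')$ would force every $s_j \in S$ to lie in $S'$ with a matching exponent, giving $S \subseteq S'$ and hence (by $|S| = |S'|$) $S = S'$. The monomial $\xx(S)$ involves $|S| = n-r+1 \geq 2$ distinct variables (using $r < n$), so cannot divide the pure power $x_i^{k+i}$; conversely, the exponent of any $x_i$ in $\xx(S)$ is at most $i$, which is strictly less than $k+i$ when $k \geq 1$, so $x_i^{k+i}$ does not divide $\xx(S)$. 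These four checks complete the minimality verification.
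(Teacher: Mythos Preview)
Your proposal is correct and follows essentially the same route as the paper's proof: both rely on Theorem~\ref{hilbert-series-theorem} to pin down $\initial_<(I_{n,k,r})$, then exhibit the listed polynomials as ideal members with the required leading terms via the telescoping $h$-identity from Lemma~\ref{j-initial-ideal} and the Demazure-character construction from \cite[Lem.~3.4, 3.5]{HRS}. Your minimality verification is in fact more complete than the paper's (which only explicitly records the check that no $x_i^{k+i}$ divides any $\xx(S)$); one small slip is that the induction on $i$ for the $h$-polynomials is ascending rather than descending, and in fact uses both $h_{k+i}(\xx_n)$ and $h_{k+i-1}(\xx_n)$ at each step, but the paper is equally terse on this point.
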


The Gr\"obner basis in Corollary~\ref{groebner-basis-corollary} is typically not reduced.

\begin{proof}
The proof of Lemma~\ref{j-initial-ideal} shows that the polynomial
$h_{k+i}(x_i, x_{i+1}, \dots, x_n)$ lies in the ideal $I_{n,k,r}$.
By \cite[Lem. 3.4]{HRS} (and in particular \cite[Eqn. 3.5]{HRS}) shows that the 
relevant variable reversed Demazure characters lie in $I_{n,k,r}$.  

Let $<$ be the lexicographic term order on $\QQ[\xx_n]$.  We have
$\initial_<(h_{k+i}(x_i, x_{i+1}, \dots, x_n)) = x_i^{k+i}$ and
$\initial_<(\kappa_{\gamma(S)^*}(\xx_n^*) = \xx(S)$ (see \cite[Lem. 3.5]{HRS}).
We know that these initial terms generate $\initial_<(I_{n,k,r})$, proving the assertion about the 
claimed collection of polynomials being a Gr\"obner basis.  When $r < n$ and $k > 0$, none of the relevant
skip monomials $\xx(S)$ are divisible by any of the variable powers $x_1^{k+1}, \dots, x_n^{k+n}$.  This proves
 the claim about minimality.
\end{proof}

For example, consider the case $(n,k,r) = (5,2,3)$.  A minimal Gr\"obner basis for $J_{5,2,3}$ is given by
the polynomials
\begin{equation*}
h_3(x_1, x_2, x_3, x_4, x_5), h_4(x_2, x_3, x_4, x_5), h_5(x_3, x_4, x_5), h_6(x_4, x_5), h_7(x_5)
\end{equation*}
together with the variable reversed Demazure characters 
\begin{equation*}
\begin{array}{ccccc}
\kappa_{(0,0,1,1,1)}(\xx_5^*), &
\kappa_{(0,2,0,1,1)}(\xx_5^*), &
\kappa_{(3,0,0,1,1)}(\xx_5^*), &
\kappa_{(0,2,2,0,1)}(\xx_5^*), &
\kappa_{(3,0,2,0,1)}(\xx_5^*), \\
\kappa_{(3,3,0,0,1)}(\xx_5^*), &
\kappa_{(0,2,2,2,0)}(\xx_5^*), &
\kappa_{(3,0,2,2,0)}(\xx_5^*), &
\kappa_{(3,3,0,2,0)}(\xx_5^*), &
\kappa_{(3,3,3,0,0)}(\xx_5^*).
\end{array}
\end{equation*}

Theorem~\ref{hilbert-series-theorem} describes the standard monomial basis
$\MMM_{n,k,r}$ of $R_{n,k,r}$ in terms of divisibility by skip monomials.  However, a more direct characterization
of this standard monomial basis is available.  
Let $k \geq 0$ and $r \leq n$. For any $(n-r)$-element subset $T \subseteq [n]$, define a length
$n$ sequence $\delta(T) := (\delta(T)_1, \dots, \delta(T)_n)$ by the formula
\begin{equation}
\delta(T)_i := \begin{cases}
i + k - 1 & i \in T \\
j - 1 & i \notin T \text{ and } i = s_j,
\end{cases}
\end{equation}
where $[n] - T = \{s_1 < \cdots < s_r\}$.
Any of the ${n \choose r}$ sequences which can be obtained in this way is an {\em $(n,k,r)$-staircase}.
For example, the $(5,2,3)$-staircases are
\begin{equation*}
\begin{array}{ccccc}
(0,1,2,5,6), &
(0,1,4,2,6), &
(0,3,1,2,6), &
(2,0,1,2,6), &
(0,1,4,5,2), \\
(0,3,1,5,2), &
(2,0,1,5,2), &
(0,3,4,1,2), &
(2,0,4,1,2), &
(2,3,0,1,2).
\end{array}
\end{equation*}

\begin{proposition}
\label{standard-monomial-basis}
Endow monomials in $\QQ[\xx_n]$ with the lexicographic term order. 
The standard monomial basis $\MMM_{n,k,r}$ of $R_{n,k,r}$ consists of
those monomials in $\QQ[\xx_n]$ whose exponent vectors are componentwise $\leq$
at least one $(n,k,r)$-staircase.
\end{proposition}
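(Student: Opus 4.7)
The proposition asserts a set equality, and the plan is to verify both containments separately.

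For the forward direction, I would show that any monomial $m = \prod x_i^{a_i}$ bounded componentwise by an $(n,k,r)$-staircase $\delta(T)$ lies in $\MMM_{n,k,r}$. Setting $U := [n] \setminus T = \{u_1 < \cdots < u_r\}$, the bound reads $a_{u_j} \leq j-1$ and $a_i \leq i + k - 1$ for all $i$, so in particular $x_i^{k+i} \nmid m$. For any skip subset $S = \{s_1 < \cdots < s_{n-r+1}\}$, the cardinality identity $|S| + |U| = n+1$ forces $S \cap U \neq \emptyset$; letting $s_\ell = u_j$ be its minimum, the sets $\{s_1, \dots, s_\ell\}$ and $\{u_1, \dots, u_j\}$ meet only in $s_\ell$ by minimality, so their union sits inside $[s_\ell]$ and has size $\ell + j - 1$. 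Thus $\ell + j \leq s_\ell + 1$, which gives $a_{s_\ell} \leq j - 1 \leq s_\ell - \ell$, meaning $\xx(S) \nmid m$.

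For the reverse direction, given $m \in \MMM_{n,k,r}$, the plan is to build an appropriate $(n-r)$-subset $T$ via a greedy algorithm. Set $u_0 := 0$ and iteratively let $u_\ell$ be the smallest index $i > u_{\ell-1}$ with $a_i \leq \ell - 1$. If the greedy completes all $r$ steps, take $T := [n] \setminus \{u_1, \dots, u_r\}$; the bound $a_i \leq \delta(T)_i$ then follows directly, using $a_i \leq k+i-1$ from the second defining condition of $\MMM_{n,k,r}$ to handle $i \in T$.

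The main obstacle is ruling out greedy failure. Suppose the algorithm fails at step $j \leq r$. The selection rule forces $a_i \geq \ell$ for $i \in (u_{\ell-1}, u_\ell)$ and $\ell \in [j-1]$, as well as $a_i \geq j$ for all $i > u_{j-1}$. I would manufacture a forbidden skip subset from this data: let $T'$ be the $r - j$ largest elements of $[n] \setminus \{u_1, \dots, u_{j-1}\}$ and set $S := [n] \setminus (\{u_1, \dots, u_{j-1}\} \cup T')$, which is precisely the set of the $n-r+1$ smallest elements of $[n] \setminus \{u_1, \dots, u_{j-1}\}$, and which has $|S| = n - r + 1$. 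Writing $s_\ell$ for its $\ell$-th smallest element, one checks $s_\ell - \ell = |\{u_1, \dots, u_{j-1}\} \cap [s_\ell]|$. A short case split on whether $s_\ell \leq u_{j-1}$ (where $s_\ell$ lies in some greedy gap $(u_{m-1}, u_m)$ with $a_{s_\ell} \geq m = s_\ell - \ell + 1$) or $s_\ell > u_{j-1}$ (where $a_{s_\ell} \geq j = s_\ell - \ell + 1$) shows $\xx(S) \mid m$, contradicting $m \in \MMM_{n,k,r}$ by Lemma~\ref{j-initial-ideal}.
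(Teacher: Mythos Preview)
Your proof is correct, and the forward direction matches the paper's argument (you supply the details the paper glosses over in choosing which element of $S$ witnesses $\xx(S) \nmid m$).

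The reverse direction, however, is genuinely different. The paper does not construct a staircase for a given $m \in \MMM_{n,k,r}$ at all; instead it argues by cardinality. Having already proved in Theorem~\ref{hilbert-series-theorem} that $|\MMM_{n,k,r}| = |S_{n,k,r}|$, it builds an injection $\Phi: S_{n,k,r} \to \NNN_{n,k,r}$ (the set of staircase-bounded monomials) via the inversion code of a tail-positive permutation, and then squeezes: $|S_{n,k,r}| \leq |\NNN_{n,k,r}| \leq |\MMM_{n,k,r}| = |S_{n,k,r}|$. Your greedy construction is more elementary and self-contained: it does not rely on the prior dimension count, and it explicitly produces a bounding staircase for each good monomial. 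The paper's route is shorter given what has already been established, but yours gives more information (an algorithmic witness $T$) and would stand on its own even before Theorem~\ref{hilbert-series-theorem}. One cosmetic point: at the end you invoke Lemma~\ref{j-initial-ideal} for the contradiction, but the nondivisibility $\xx(S) \nmid m$ is literally part of Definition~\ref{good-monomials}, so citing that would be cleaner.
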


\begin{proof}
Let $\NNN_{n,k,r}$ be the collection of monomials in $\QQ[\xx_n]$ whose exponent
vectors are componentwise $\leq$ at least one $(n,k,r)$-staircase.  
If $\delta_n(T) = (a_1, \dots, a_n)$ is an $(n,k,r)$-staircase 
for some $(n-r)$-element set $T \subseteq [n]$ and $m = x_1^{a_1} \cdots x_n^{a_n}$ is the 
corresponding monomial, 
it is clear that $a_i < k+i$ for all $i$, so that 
$x_i^{k+i} \nmid m$.  
If $S \subseteq [n]$ satisfies $|S| = n-r+1$ then at least one index $i \in S$ satisfies 
$i \notin T$, which forces $\xx(S) \nmid m$.  It follows that $\NNN_{n,k,r} \subseteq \MMM_{n,k,r}$.

On the other hand, we may construct a map
\begin{equation}
\Phi: S_{n,k,r} \rightarrow \NNN_{n,k,r}
\end{equation}
by letting $\Phi(\pi) = (c_1, \dots, c_n)$ be the code of any $r$-tail positive permutation $\pi \in S_{n,k,r}$.
The fact that $\pi$ is $r$-tail positive implies that $\Phi(\pi) \in \NNN_{n,k,r}$, so that $\Phi$
is well defined.  It is clear that $\Phi$ is injective, so that 
\begin{equation}
|S_{n,k,r}| \leq |\NNN_{n,k,r}| \leq |\MMM_{n,k,r}| = |S_{n,k,r}|
\end{equation}
and we have $\NNN_{n,k,r} = \MMM_{n,k,r}$, as desired.
\end{proof}

For example, if $(n,k,r) = (2,2,1)$ the $(2,2,1)$-staircases are $(0,3)$ and $(2,0)$ so that 
\begin{equation*}
\MMM_{2,2,1} = \{1, x_1, x_1^2, x_2, x_2^2, x_2^3 \}.
\end{equation*}

\section{Frobenius series}
\label{Frobenius}

In this section we derive the Frobenius series of $R_{n,k,r}$.  Our first lemma is a short exact sequence
which establishes a Pascal-type recursion for $\grFrob(R_{n,k,r}; q)$.

\begin{lemma}
\label{short-exact-sequence}
Suppose $n, k, r \geq 0$ with $r < n$ and $k > 0$.
There is a short exact sequence of $S_n$-modules
\begin{equation}
0 \rightarrow R_{n,k-1,r} \rightarrow R_{n,k,r} \rightarrow R_{n,k,r+1} \rightarrow 0,
\end{equation}
where the first map is homogeneous of degree $n-r$ and the second map is homogeneous of degree $0$.  
Equivalently, we have the equality of graded Frobenius characters
\begin{equation}
\grFrob(R_{n,k,r}; q) = \grFrob(R_{n,k,r+1}; q) + q^{n-r} \cdot \grFrob(R_{n,k-1,r}; q).
\end{equation}
\end{lemma}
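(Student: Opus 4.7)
The plan is to realize the second map as the tautological projection induced by the inclusion $I_{n,k,r} \subseteq I_{n,k,r+1}$ and the first map as multiplication by $e_{n-r}(\xx_n)$. The ideal identity
$$I_{n,k,r+1} = I_{n,k,r} + \langle e_{n-r}(\xx_n) \rangle$$
yields an $S_n$-equivariant, degree-preserving surjection $\pi \colon R_{n,k,r} \twoheadrightarrow R_{n,k,r+1}$ whose kernel is precisely $e_{n-r}(\xx_n) \cdot R_{n,k,r}$. I would then define $\phi \colon R_{n,k-1,r} \to R_{n,k,r}$ by $[f] \mapsto [e_{n-r}(\xx_n) f]$; once well-defined, $\phi$ is $S_n$-equivariant and homogeneous of degree $n-r$, lands in $\ker(\pi)$, and by the description of $\ker(\pi)$ above surjects onto $\ker(\pi)$.

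The main step is to verify that $\phi$ is well-defined, i.e. $e_{n-r}(\xx_n) \cdot I_{n,k-1,r} \subseteq I_{n,k,r}$. Every generator of $I_{n,k-1,r}$ except $h_k(\xx_n)$ already lies in $I_{n,k,r}$, so the only nontrivial case is $e_{n-r}(\xx_n) \cdot h_k(\xx_n) \in I_{n,k,r}$. For this I would invoke the classical Newton-type identity
$$\sum_{i=0}^m (-1)^i e_i(\xx_n) h_{m-i}(\xx_n) = 0 \qquad (m \geq 1),$$
coming from $E(-t)H(t) = 1$, with $m = n-r+k$. The summand at $i = n-r$ is $(-1)^{n-r} e_{n-r}(\xx_n) h_k(\xx_n)$; for $0 \leq i \leq n-r-1$ one has $k+1 \leq m-i \leq k+n$, so $h_{m-i}(\xx_n)$ is a generator of $I_{n,k,r}$; for $n-r+1 \leq i \leq n$ the factor $e_i(\xx_n)$ is itself a generator of $I_{n,k,r}$; and for $i > n$ the factor $e_i(\xx_n)$ vanishes identically in $\QQ[\xx_n]$. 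Solving for the isolated term exhibits $e_{n-r}(\xx_n) h_k(\xx_n)$ as an element of $I_{n,k,r}$.

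Injectivity of $\phi$ is then pure Hilbert series bookkeeping. The standard $q$-Pascal identity
$${n+k-r \brack k}_q = {n+k-r-1 \brack k}_q + q^{n-r} {n+k-r-1 \brack k-1}_q,$$
combined with Theorem \ref{hilbert-series-theorem}, gives
$$\Hilb(R_{n,k,r}; q) = \Hilb(R_{n,k,r+1}; q) + q^{n-r} \cdot \Hilb(R_{n,k-1,r}; q).$$
Since $\phi$ already surjects onto $\ker(\pi)$, comparing graded dimensions forces $\phi$ to be an isomorphism onto its image in every graded piece, hence injective. The equality of graded Frobenius characters then follows immediately from additivity on short exact sequences. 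The only genuine obstacle I foresee is the Newton-identity bookkeeping in the middle paragraph; the rest of the argument is formal once the ideal identity for $I_{n,k,r+1}$ and the Hilbert series recursion are in hand.
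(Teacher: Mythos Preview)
Your proposal is correct and follows essentially the same route as the paper: the same two maps (canonical projection and multiplication by $e_{n-r}(\xx_n)$), and the same appeal to Theorem~\ref{hilbert-series-theorem} plus a Pascal relation to force exactness by a dimension count. The one substantive difference is in the verification that $h_k(\xx_n)\,e_{n-r}(\xx_n) \in I_{n,k,r}$: the paper uses the Pieri rule $h_k\,e_{n-r} = s_{(k,1^{n-r})} + s_{(k+1,1^{n-r-1})}$ and then a chain of congruences among hook Schur polynomials to kill both terms, whereas your argument via the identity $\sum_{i=0}^{m}(-1)^i e_i h_{m-i}=0$ at $m=n-r+k$ is a cleaner one-line bookkeeping check that avoids Schur functions entirely.
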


\begin{proof}
We have the inclusion of ideals $I_{n,k,r} \subseteq I_{n,k,r+1}$; we let the second map be the canonical
projection $\pi: R_{n,k,r} \twoheadrightarrow R_{n,k,r+1}$.  
We have a homogeneous map $\widetilde{\varphi}: \QQ[\xx_n] \rightarrow R_{n,k,r}$ of degree $n-r$
given by multiplication by $e_{n-r}(\xx_n)$, and then projecting onto $R_{n,k,r}$.  

We claim that 
$\widetilde{\varphi}(I_{n,k-1,r}) = 0$, so that $\widetilde{\varphi}$ induces a well defined map
$\varphi: R_{n,k-1,r} \rightarrow R_{n,k,r}$.  This is equivalent to showing that 
$h_k(\xx_n) \cdot e_{n-r}(\xx_n) \in I_{n,k,r}$.  The Pieri Rule implies that 
\begin{equation}
h_k(\xx_n) \cdot e_{n-r}(\xx_n) = s_{(k,1^{n-r})}(\xx_n) + s_{(k+1, 1^{n-r-1})}(\xx_n);
\end{equation}
we will show that both terms on the right hand side lie in $I_{n,k,r}$.

To see that $s_{(k,1^{n-r})}(\xx_n) \in I_{n,k,r}$, observe that, for $1 \leq i \leq r$ we have
\begin{equation}
\label{crucial-relation}
h_{k-r+i}(\xx_n) \cdot e_{n-i+1}(\xx_n) = s_{(k-r+i,1^{n-i+1})}(\xx_n) + s_{(k-r+i+1,1^{n-i})}(\xx_n) \in I_{n,k,r}.
\end{equation}
It follows that modulo $I_{n,k,r}$ we have the congruences
\begin{equation}
\label{congruence-chain}
s_{(k,1^{n-r})}(\xx_n)  \equiv - s_{(k+1,1^{n-r-1})}(\xx_n) \equiv s_{(k+2,1^{n-r-2})}(\xx_n) \equiv \cdots 
\equiv \pm s_{(k+n-r)}(\xx_n) \equiv 0,
\end{equation}
where the last congruence used the fact that $s_{(k+n-r)}(\xx_n) = h_{k+n-r}(\xx_n) \in I_{n,k,r}$ since $r < n$.
This chain of congruences also shows that $s_{(k+1,1^{n-r-1})}(\xx_n) \in I_{n,k,r}$.

By the last paragraph, we have a well defined induced map $\varphi: R_{n,k-1,r} \rightarrow R_{n,k,r}$.  It is clear that
$\mathrm{Im}(\varphi) = \mathrm{Ker}(\pi)$.  Moreover, the Pascal relation implies that 
\begin{equation}
|S_{n,k-1,r}| + |S_{n,k,r+1}| = |S_{n,k,r}|,
\end{equation}
so that by Theorem~\ref{hilbert-series-theorem} we have
\begin{equation}
\dim(R_{n,k-1,r}) + \dim(R_{n,k,r+1}) = \dim(R_{n,k,r}).
\end{equation}
Since $\pi$ is a surjection, this forces the sequence 
\begin{equation}
0 \rightarrow R_{n,k-1,r} \xrightarrow{\varphi} R_{n,k,r} \xrightarrow{\pi} R_{n,k,r+1} \rightarrow 0
\end{equation}
to be exact.  To finish the proof,  observe that the maps $\varphi$ and $\pi$ commute with the action
of $S_n$.
\end{proof}

We are ready to state the graded Frobenius image of $R_{n,k,r}$.  We will give several formulas for this image.
For any word $w$ over the nonnegative integers, define the monomial $\xx^w$ to be
\begin{equation}
\xx^w := x_1^{\text{\# of $1$'s in $w$}} x_2^{\text{\# of $2$'s in $w$}} \cdots;
\end{equation}
in particular, any copies of $0$ in $w$ do not affect $\xx^w$.

\begin{theorem}
\label{graded-isomorphism-type}
The graded Frobenius image of $R_{n,k,r}$ is given by
\begin{align}
\grFrob(R_{n,k,r};q) 
&= {n+k-r \brack k}_q \cdot \sum_{T \in \SYT(n)} q^{\mathrm{maj}(T)} \cdot s_{\mathrm{shape}(T)} \\
&= \sum_w q^{\inv(w)} \xx^w.
\end{align}
The last sum ranges over all length $n+k$ words $w = w_1 \dots w_{n+k}$ in the alphabet of nonnegative integers
which contain precisely $k$ copies of $0$ and are $r$-tail positive.  
\end{theorem}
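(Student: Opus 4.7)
The plan is to establish the two stated equalities separately: the first by induction using Lemma~\ref{short-exact-sequence}, and the second by a direct combinatorial decomposition of the word sum.

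For the first equality
$$\grFrob(R_{n,k,r};q) = {n+k-r \brack k}_q \cdot \sum_{T \in \SYT(n)} q^{\maj(T)} s_{\shape(T)},$$
I would induct on the pair $(k, n-r)$, ordered lexicographically. The base cases are $k = 0$ and $r \geq n$, in which $R_{n,k,r} = R_n$ and ${n+k-r \brack k}_q = 1$, so the identity reduces to the classical Lusztig-Stanley formula $\grFrob(R_n;q) = \sum_T q^{\maj(T)} s_{\shape(T)}$. For the inductive step with $r < n$ and $k > 0$, Lemma~\ref{short-exact-sequence} gives the Pascal-type recursion
$$\grFrob(R_{n,k,r};q) = \grFrob(R_{n,k,r+1};q) + q^{n-r} \cdot \grFrob(R_{n,k-1,r};q),$$
and combining the inductive hypothesis applied to both summands with the standard $q$-Pascal identity ${n+k-r \brack k}_q = {n+k-r-1 \brack k}_q + q^{n-r} {n+k-r-1 \brack k-1}_q$ closes the induction.

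For the second equality, I would factor the word sum explicitly. Given an $r$-tail positive length $n+k$ word $w$ with exactly $k$ zeros, record the positions $1 \leq p_1 < \cdots < p_k \leq n+k-r$ of its zeros and let $u = u_1 \dots u_n$ be the positive word obtained by deleting the zeros. Then $\xx^w = \xx^u$, and since a positive letter contributes an inversion with a zero exactly when it appears to the left of that zero, one computes $\inv(w) = \inv(u) + \sum_{i=1}^k (p_i - i)$ (the $i$-th zero has $p_i - i$ positive letters to its left). Summing first over zero placements and using the bijection $p_i \mapsto p_i - i$ identifying strictly increasing sequences $1 \leq p_1 < \cdots < p_k \leq n+k-r$ with partitions fitting in a $k \times (n-r)$ box,
$$\sum_w q^{\inv(w)} \xx^w = {n+k-r \brack k}_q \cdot \sum_u q^{\inv(u)} \xx^u,$$
where $u$ ranges over length $n$ words in the positive integers. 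It then suffices to show $\sum_u q^{\inv(u)} \xx^u = \sum_T q^{\maj(T)} s_{\shape(T)}$, a classical identity which follows from RSK (bijecting words $u$ with pairs $(P,Q)$ of same-shape tableaux satisfying $\xx^P = \xx^u$ and $\maj(Q) = \maj(u)$) combined with Foata's bijection showing that $\inv$ and $\maj$ are equidistributed on each multiset rearrangement class.

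I expect the main obstacle to be a numerological rather than a conceptual one: the degree shift $q^{n-r}$ produced by multiplication by $e_{n-r}(\xx_n)$ in Lemma~\ref{short-exact-sequence} must match exactly the exponent appearing in the $q$-Pascal recursion for ${n+k-r \brack k}_q$. This alignment is fortuitous but essential for the induction to close; once it is verified, the rest of the argument is bookkeeping together with appeals to standard results on Mahonian statistics and RSK.
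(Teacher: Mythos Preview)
Your proof is correct and follows essentially the same approach as the paper: induction via the short exact sequence of Lemma~\ref{short-exact-sequence} together with the $q$-Pascal identity for the first equality, and the factorization of the word sum by zero placements combined with $\inv$/$\maj$ equidistribution and RSK for the second. The only difference is cosmetic ordering---the paper establishes the equality of the two right-hand sides first and then does the induction---so there is nothing to add.
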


\begin{proof}
By considering the placement of the $k$ copies of $0$ in a $r$-tail positive word $w$ appearing in the final sum, 
we see that
\begin{equation}
\sum_w q^{\inv(w)} \xx^w = {n+k-r \brack k}_q \cdot
\sum_{\substack{v = v_1 \dots v_n \\ v_i \in \ZZ_{> 0}}} q^{\inv(v)} \xx^v.
\end{equation}
On the other hand, we have
\begin{equation}
\sum_{\substack{v = v_1 \dots v_n \\ v_i \in \ZZ_{> 0}}} q^{\inv(v)} \xx^v = 
\sum_{\substack{v = v_1 \dots v_n \\ v_i \in \ZZ_{> 0}}} q^{\maj(v)} \xx^v =
\sum_{T \in \SYT(n)} q^{\mathrm{maj}(T)} \cdot s_{\mathrm{shape}(T)} =
\grFrob(R_n;q),
\end{equation}
where the first equality uses the equidistribution of the statistics $\inv$ and $\maj$ on permutations of a fixed
multiset of positive integer, the second follows from standard properties of the RSK correspondence,
and the third is a consequence of the work of Lusztig-Stanley \cite{Stanley}.

By the last paragraph, it suffices to prove the first equality asserted in the statement of the theorem.
If $r \geq n$ or $k = 0$ then $R_{n,k,r} = R_n$ and this equality is trivial.  Otherwise, we have the $q$-Pascal relation
\begin{equation}
{n+k-r \brack k}_q = {n+k-r-1 \brack k}_q + q^{n-r} \cdot {n+k-r-1 \brack k-1}_q,
\end{equation}
so that the theorem follows from Lemma~\ref{short-exact-sequence} and induction.
\end{proof}

The short exact sequence in Lemma~\ref{short-exact-sequence} gives a recipe for constructing 
bases of $R_{n,k,r}$ from bases of the classical coinvariant algebra $R_n$.  
We switch from working over $\QQ$ to working over an arbitrary field $\FF$, so that 
the ideals $I_{n,k,r}, I_n$ are defined inside the ring $\FF[\xx_n] := \FF[x_1, \dots, x_n]$
and we have $R_{n,k,r} := \FF[\xx_n]/I_{n,k,r}, R_n := \FF[\xx_n]/I_n$.

\begin{theorem}
\label{basis-recipe}
Let $\CCC_n = \{b_{\pi}(\xx_n) \,:\, \pi \in S_n\}$ be a collection of polynomials in $\FF[\xx_n]$ indexed by permutations
in $S_n$ which descends to a basis of $R_n$.  The collection of polynomials 
\begin{equation} \CCC_{n,k,r} :=
\{ b_{\pi}(\xx_n) \cdot e_{\lambda}(\xx_n) \,:\, \text{$\pi \in S_n$, $\lambda_1 \leq n-r$, 
and $\lambda$ has $\leq k$ parts} \}
\end{equation}
in $\FF[\xx_n]$
descends to a basis of $R_{n,k,r}$.
\end{theorem}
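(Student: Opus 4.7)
The plan is to induct on $k + (n-r)$, using the short exact sequence of Lemma~\ref{short-exact-sequence} to split $\CCC_{n,k,r}$ into a ``sub'' piece and a ``quotient'' piece. The base cases are $k = 0$ and $r = n$: in both situations $R_{n,k,r} = R_n$, and the only partition $\lambda$ with $\lambda_1 \leq n-r$ and at most $k$ parts is the empty one, so $\CCC_{n,k,r}$ reduces to the given basis $\CCC_n$ and there is nothing to prove.

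For the inductive step, assume $k > 0$ and $r < n$, and partition the indexing set
$\Lambda := \{\lambda : \lambda_1 \leq n-r, \, \ell(\lambda) \leq k\}$
as $\Lambda = A \sqcup B$, where
$A = \{\lambda \in \Lambda : \lambda_1 \leq n-r-1\}$
and
$B = \{\lambda \in \Lambda : \lambda_1 = n-r\}$.
Observe that $A$ is exactly the indexing set for $\CCC_{n,k,r+1}$ (with $\lambda_1 \leq n-(r+1)$ and $\leq k$ parts), while the map $\mu \mapsto (n-r,\mu)$ is a bijection between the indexing set of $\CCC_{n,k-1,r}$ and $B$; under this bijection one has $e_\lambda = e_{n-r} \cdot e_\mu$.

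Let $\varphi : R_{n,k-1,r} \to R_{n,k,r}$ and $\pi : R_{n,k,r} \to R_{n,k,r+1}$ be the maps of Lemma~\ref{short-exact-sequence}, where $\varphi$ is induced by multiplication by $e_{n-r}(\xx_n)$. By the inductive hypothesis, $\CCC_{n,k,r+1}$ descends to a basis of $R_{n,k,r+1}$ and $\CCC_{n,k-1,r}$ descends to a basis of $R_{n,k-1,r}$. The elements of $\CCC_{n,k,r}$ indexed by $A$ are literally the polynomial lifts of the basis $\CCC_{n,k,r+1}$, so they project to a basis of $R_{n,k,r+1}$ under $\pi$. The elements indexed by $B$ have the form $b_\sigma \cdot e_{n-r} \cdot e_\mu = \varphi(b_\sigma \cdot e_\mu)$, hence their images in $R_{n,k,r}$ coincide with $\varphi(\CCC_{n,k-1,r})$, which is the image under $\varphi$ of a basis of $R_{n,k-1,r}$. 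By exactness of
\begin{equation*}
0 \to R_{n,k-1,r} \xrightarrow{\varphi} R_{n,k,r} \xrightarrow{\pi} R_{n,k,r+1} \to 0,
\end{equation*}
the union of these two families descends to a basis of $R_{n,k,r}$, closing the induction.

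The only substantive point to check is the combinatorial matching between the splitting $\Lambda = A \sqcup B$ (by the value of $\lambda_1$) and the splitting of $R_{n,k,r}$ afforded by the short exact sequence, together with the observation that factoring out an $e_{n-r}$ from those $e_\lambda$ with $\lambda_1 = n-r$ produces precisely the $e_\mu$'s indexing $\CCC_{n,k-1,r}$. Once these alignments are in hand the argument is entirely formal, so I do not foresee a serious obstacle beyond being careful with the base cases.
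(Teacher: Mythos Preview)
Your proof is correct and essentially identical to the paper's: both induct via the short exact sequence of Lemma~\ref{short-exact-sequence}, splitting $\CCC_{n,k,r}$ into $\CCC_{n,k,r+1}$ (your set $A$) and $e_{n-r}(\xx_n)\cdot\CCC_{n,k-1,r}$ (your set $B$). The one point the paper adds that you gloss over is a remark that Lemma~\ref{short-exact-sequence}, stated there over $\QQ$, continues to hold as a short exact sequence of $\FF$-vector spaces once one checks that $\dim_{\FF} R_{n,k,r} = |S_{n,k,r}|$ over an arbitrary field.
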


\begin{proof}
This is trivial when $k = 0$ or $r \geq n$, so we assume $k > 0$ and $r < n$.  

The arguments of Section~\ref{Hilbert} apply to show that $\dim(R_{n,k,r}) = |S_{n,k,r}|$ when working 
over the arbitrary field $\FF$.
\footnote{If $\FF$ is a finite field, there might not be enough elements in $\FF$ for the 
point set $Y_{n,k,r}$ of Definition~\ref{y-point-set} to make sense.  To get around this, we may apply
\cite[Lem. 3.1]{HR} to harmlessly replace $\FF$ by an extension field $\mathbb{K}$.}
The proof of Lemma~\ref{short-exact-sequence} then applies over $\FF$
to give a short exact sequence of graded $\FF$-vector spaces
\begin{equation}
0 \rightarrow R_{n,k-1,r} \xrightarrow{\cdot e_{n-r}(\xx_n)} R_{n,k,r} \xrightarrow{\pi} R_{n,k,r+1} \rightarrow 0,
\end{equation}
where $\pi$ is the canonical projection.  
We may inductively assume that $\CCC_{n,k-1,r}$ descends to a $\FF$-basis of $R_{n,k-1,r}$ and that 
$\CCC_{n,k,r+1}$ descends to a $\FF$-basis of $S_{n,k,r+1}$.  Exactness implies that  
\begin{equation}
\{ f(\xx_n) \,:\, f(\xx_n) \in \CCC_{n,k,r+1} \} \cup \{ g(\xx_n) \cdot e_{n-r}(\xx_n) \,:\, g(\xx_n) \in \CCC_{n,k-1,r} \} =
\CCC_{n,k,r}
\end{equation}
descends to an $\FF$-basis of $R_{n,k,r}$.
\end{proof}

Theorem~\ref{basis-recipe} reinforces the fact that $R_{n,k,r}$ consists of 
${n+k-r \choose k}$ copies of $R_n$, graded by the $q$-binomial coefficient ${n+k-r \brack k}_q$.
Interesting bases $\CCC_n$ to which Theorem~\ref{basis-recipe} can be applied include
\begin{itemize}
\item  the {\em Artin basis} \cite{Artin}
\begin{equation}
\CCC_n = \{ x_1^{i_1} \cdots x_n^{i_n} \,:\, 0 \leq i_j < j \}
\end{equation}
(which is connected to the $\inv$ statistic on permutations in $S_n$) and
\item the {\em Garsia-Stanton basis} (or the {\em descent monomial basis} \cite{Garsia, GS}
$\CCC_n = \{gs_{\pi} \,:\, \pi \in S_n \}$ where 
\begin{equation}
gs_{\pi} = \prod_{\pi_i > \pi_{i+1}} x_{\pi_1} \cdots x_{\pi_i}
\end{equation}
(which is connected to the $\maj$ statistic on permutations in $S_n$).
\end{itemize}

The GS basis above can be deformed somewhat to describe the isomorphism type of $R_{n,k,r}$
as a module over the 0-Hecke algebra.
The algebra $H_n(0)$ acts on the polynomial ring $\FF[\xx_n]$ by letting the generator $T_i$ act by 
the {\em Demazure operator} $\sigma_i$, where
\begin{equation}
\sigma_i . f := \frac{x_i f - x_{i+1} s_i(f)}{x_i - x_{i+1}}.
\end{equation}
Here $s_i(f)$ is the polynomial obtained by interchanging $x_i$ and $x_{i+1}$ in $f(\xx_n)$.
It can be shown that if $f \in \FF[\xx_n]^{S_n}$ is any symmetric polynomial and $g \in \FF[\xx_n]$
is an arbitrary polynomial then
\begin{equation}
\sigma_i(fg) = f \sigma_i(g).
\end{equation}
Therefore,
any ideal $I \subseteq \FF[\xx_n]$ generated by symmetric polynomials is stable under
the action of $H_n(0)$.  In particular, the ideal $I_{n,k,r}$ is stable under the action of $H_n(0)$,
and the quotient $R_{n,k,r} = \FF[\xx_n]/I_{n,k,r}$ carries the structure of an $H_n(0)$-module.

Huang \cite{Huang} studied the coinvariant ring $R_n$ as a graded module over the 0-Hecke algebra
$H_n(0)$.
We apply Theorem~\ref{basis-recipe} to generalize Huang's results to the quotient $R_{n,k,r}$.
If $V$ is any graded $H_n(0)$-module, we let $V(i)$ denote the graded $H_n(0)$-module
with components $V(i)_j := V_{i+j}$.

\begin{theorem}
\label{zero-hecke-theorem}
Let $n,k,$ and $r$ be nonnegative integers with $r \leq n$.  We have an isomorphism of 
graded $H_n(0)$-modules
\begin{equation}
\label{direct-sum-equation}
R_{n,k,r} \cong \bigoplus_{\lambda \subseteq (n-r) \times k} R_n(-|\lambda|).
\end{equation}
Here the direct sum is over all partitions $\lambda$ which satisfy $\lambda_1 \leq n-r$ and have at most $k$ 
parts.  The module $R_n = \FF[\xx_n]/I_n$ is the coinvariant algebra viewed as a graded $H_n(0)$-module.
\end{theorem}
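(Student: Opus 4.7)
The plan is to combine the short exact sequence of Lemma~\ref{short-exact-sequence} with induction, using projectivity of $R_n$ over $H_n(0)$ (from Huang's work \cite{Huang}) to split at each inductive step.

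First I would verify that the short exact sequence
\[
0 \to R_{n,k-1,r}(-(n-r)) \to R_{n,k,r} \to R_{n,k,r+1} \to 0
\]
of Lemma~\ref{short-exact-sequence} is equivariant for the larger algebra $H_n(0)$, not just $S_n$. The first map is multiplication by $e_{n-r}(\xx_n)$, and since $\sigma_i(fg) = f\sigma_i(g)$ whenever $f$ is symmetric, this map commutes with every Demazure operator. The second map is the canonical projection induced by the inclusion of ideals $I_{n,k,r} \subseteq I_{n,k,r+1}$; both ideals are generated by symmetric polynomials and hence are $H_n(0)$-stable, so the projection is $H_n(0)$-equivariant.

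I would then proceed by induction on $k + (n-r)$. The base cases are $k = 0$ and $r = n$, for both of which $R_{n,k,r} = R_n$ and the claimed decomposition contains only the summand indexed by $\lambda = \emptyset$. For the inductive step, with $k \geq 1$ and $r < n$, the inductive hypothesis for $R_{n,k,r+1}$ yields
\[
R_{n,k,r+1} \cong \bigoplus_{\mu \subseteq (n-r-1) \times k} R_n(-|\mu|),
\]
a direct sum of degree-shifted copies of the classical coinvariant algebra. Huang showed that $R_n$ is a projective graded $H_n(0)$-module; since projectivity is preserved by direct sums and grading shifts, $R_{n,k,r+1}$ is projective, and the short exact sequence splits as graded $H_n(0)$-modules:
\[
R_{n,k,r} \cong R_{n,k-1,r}(-(n-r)) \oplus R_{n,k,r+1}.
\]

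Finally I would substitute the inductive description $R_{n,k-1,r} \cong \bigoplus_{\lambda \subseteq (n-r) \times (k-1)} R_n(-|\lambda|)$ and reconcile the indexing. Partitions $\nu \subseteq (n-r) \times k$ split bijectively into those with $\nu_1 = n-r$ (arising from the first summand with $\lambda = (\nu_2, \nu_3, \ldots)$ and $|\nu| = |\lambda| + (n-r)$) and those with $\nu_1 < n-r$ (arising from the second summand with $\mu = \nu$ and $|\nu| = |\mu|$). This matches the degree shifts and yields the desired isomorphism. The main obstacle is ensuring the sequence splits at the $H_n(0)$-level; this reduces to invoking projectivity of $R_n$, which is the crucial input from Huang's work that lets us bootstrap the inductive hypothesis on $R_{n,k,r+1}$ into a genuine splitting rather than just a filtration.
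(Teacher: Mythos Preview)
Your proof is correct but takes a genuinely different route from the paper. The paper proves the theorem by applying Theorem~\ref{basis-recipe} to Huang's modified Garsia--Stanton basis of $R_n$: this produces an explicit basis $\{e_\lambda(\xx_n)\cdot\bar\sigma_\pi.\xx_{\Des(\pi)}\}$ of $R_{n,k,r}$, and since each $e_\lambda(\xx_n)$ is symmetric the basis elements with fixed $\lambda$ span an $H_n(0)$-submodule isomorphic to $R_n(-|\lambda|)$. Your argument instead uses the short exact sequence of Lemma~\ref{short-exact-sequence} together with induction, invoking projectivity of $R_n$ (and hence, by the inductive hypothesis, of the quotient $R_{n,k,r+1}$) to split the sequence at each step. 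It is worth noting that the paper explicitly remarks, just after its proof, that the authors did \emph{not} see how to deduce the theorem directly from the short exact sequence, since $H_n(0)$ is not semisimple and splitting is not automatic; your observation that the inductive hypothesis already forces the quotient to be projective resolves exactly this concern. The paper's approach has the advantage of yielding concrete bases of $R_{n,k,r}$ as a by-product (via Theorem~\ref{basis-recipe}); yours is shorter and more structural but does not produce such bases. One small point you could make explicit: to obtain a \emph{graded} splitting, note that the $H_n(0)$-action preserves degree, so the sequence decomposes degree-by-degree into short exact sequences of $H_n(0)$-modules, each with projective quotient (a direct summand of a projective module is projective), and hence each splits.
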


\begin{proof}
Huang \cite{Huang} introduced the following modified GS 
basis of $R_n$.  For $1 \leq i \leq n-1$, define an operator $\bar{\sigma}_i$ on $\FF[\xx_n]$ by the rule
$\bar{\sigma}_i := \sigma_i - 1$.  For any permutation $\pi \in S_n$, define 
$\bar{\sigma}_{\pi} := \bar{\sigma}_{i_1} \cdots \bar{\sigma}_{i_k}$ where $s_{i_1} \cdots s_{i_k}$
is any reduced word for $\pi$.  Finally, given $\pi \in S_n$, let $\xx_{\Des(\pi)}$ be the monomial
\begin{equation}
\xx_{\Des(\pi)} := \prod_{i \in \Des(\pi)} (x_1 x_2 \cdots x_i).
\end{equation}
For example, we have $\xx_{21543} = (x_1) \cdot (x_1 x_2 x_3) \cdot (x_1 x_2 x_3 x_4)$.
Huang proves \cite[Thm. 4.5]{Huang} that the collection of polynomials
\begin{equation}
\CCC_n := \{ \bar{\sigma}_{\pi}. \xx_{\Des(\pi)} \,:\, \pi \in S_n \}
\end{equation}
in $\FF[\xx_n]$ descends to a $\FF$-basis for $R_n$.

Applying Theorem~\ref{basis-recipe} to Huang's basis of $R_n$,
we get a collection of polynomials $\CCC_{n,k,r}$ given by
\begin{equation}
\CCC_{n,k,r} := \{ e_{\lambda}(\xx_n) \cdot \bar{\sigma}_{\pi}.\xx_{\Des(\pi)} \,:\, 
\pi \in S_n \text{ and $\lambda \subseteq (n-r) \times k$} \}
\end{equation}
which descends to a basis of $R_{n,k,r}$.
The symmetric polynomial $e_{\lambda}(\xx_n)$ has degree $|\lambda|$ and the 
symmetry of $e_{\lambda}(\xx_n)$ gives
\begin{equation}
\sigma_i.(e_{\lambda}(\xx_n) \cdot \bar{\sigma}_{\pi}.\xx_{\Des(\pi)}) = 
e_{\lambda}(\xx_n) \sigma_i.(\bar{\sigma}_{\pi}.\xx_{\Des(\pi)}).
\end{equation}
It follows that, for $\lambda \subseteq (n-r) \times k$ fixed, the collection of polynomials
\begin{equation}
\CCC_{n,k,r}(\lambda) := \{ e_{\lambda}(\xx_n) \cdot \bar{\sigma}_{\pi}.\xx_{\Des(\pi)} \,:\, 
\pi \in S_n  \}
\end{equation}
descends inside $R_{n,k,r}$ to a $\FF$-basis of a copy of $R_n$ with degree shifted up by $|\lambda|$.
\end{proof}

It may be tempting to try to prove Theorem~\ref{zero-hecke-theorem} in the same fashion 
as Theorem~\ref{graded-isomorphism-type} -- by applying the short exact sequence of 
Lemma~\ref{short-exact-sequence} directly and without appealing to Theorem~\ref{basis-recipe}.
However, 
although the maps in this sequence commute with the action of $H_n(0)$,
since  $H_n(0)$ is not semisimple
it is not {\em a priori} clear that this sequence
splits in the category of $H_n(0)$-modules.  
Theorem~\ref{zero-hecke-theorem} guarantees that this 
sequence splits; the authors do not know of a more direct way to see this splitting.

\begin{corollary}
\label{characteristic-hecke}
Let $n, k,$ and $r$ be nonnegative integers with $r \leq n$.
\begin{enumerate}
\item The length-degree bigraded quasisymmetric characteristic $\mathrm{Ch}_{q,t}(R_{n,k,r})$ is given by
\begin{equation}
\mathrm{Ch}_{q,t}(R_{n,k,r}) = {n+k-r \brack k}_q  \cdot \mathrm{Ch}_{q,t}(R_n) =
{n+k-r \brack k}_q \cdot \sum_{\pi \in S_n} q^{\maj(\pi)} t^{\inv(\pi)} 
F_{\Des(\pi^{-1}),n},
\end{equation}
where $F_{\Des(\pi^{-1}),n}$ is the fundamental quasisymmetric function.
\item  The degree graded quasisymmetric characteristic $\mathrm{Ch}_q(R_{n,k,r})$ is in fact symmetric
and given by
\begin{equation}
\mathrm{Ch}_q(R_{n,k,r}) = {n+k-r \brack k}_q  \cdot \mathrm{Ch}_{q}(R_n) =
{n+k-r \brack k}_q \cdot \sum_{T \in \SYT(n)} q^{\maj(T)} s_{\mathrm{shape}(T)}.
\end{equation}
\item  The $H_n(0)$-module $R_{n,k,r}$ is projective.  Its degree
 graded noncommutative characteristic $\mathbf{ch}_q(R_{n,k,r})$ is
\begin{equation}
\mathbf{ch}_q(R_{n,k,r}) = {n+k-r \brack k}_q  \cdot \mathbf{ch}_{q}(R_n) =
{n+k-r \brack k}_q  \cdot \sum_{\alpha} q^{\maj(\alpha)} \mathbf{s}_{\alpha},
\end{equation}
where $\alpha$ ranges over all strong compositions of $n$, the major index is 
$\maj(\alpha) = (\alpha_1) + (\alpha_1 + \alpha_2) + \cdots$, and 
$\mathbf{s}_{\alpha}$ is the noncommutative ribbon Schur function.
\end{enumerate}
\end{corollary}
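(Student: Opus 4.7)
The plan is to deduce all three parts as direct consequences of Theorem~\ref{zero-hecke-theorem}, the decomposition
\begin{equation*}
R_{n,k,r} \;\cong\; \bigoplus_{\lambda \subseteq (n-r) \times k} R_n(-|\lambda|)
\end{equation*}
of graded $H_n(0)$-modules, combined with the known 0-Hecke characteristic formulas for the classical coinvariant algebra $R_n$ proved by Huang \cite{Huang}. The bridge between the two is the standard identity
\begin{equation*}
\sum_{\lambda \subseteq (n-r) \times k} q^{|\lambda|} \;=\; {n+k-r \brack k}_q,
\end{equation*}
where the sum is over partitions fitting inside the $(n-r) \times k$ rectangle.

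For part (1), I would first observe that shifting the degree-grading of a graded $H_n(0)$-module leaves its length filtration unchanged, so $\mathrm{Ch}_{q,t}(R_n(-|\lambda|)) = q^{|\lambda|} \cdot \mathrm{Ch}_{q,t}(R_n)$. The cyclic decomposition of $R_n$ used by Huang (coming from the modified Garsia-Stanton basis) combines with the $e_\lambda$-shifts in Theorem~\ref{basis-recipe} to produce an explicit cyclic decomposition of $R_{n,k,r}$, so that $\mathrm{Ch}_{q,t}$ is additive over the direct sum in Theorem~\ref{zero-hecke-theorem}. Summing over $\lambda$ and quoting Huang's evaluation
\begin{equation*}
\mathrm{Ch}_{q,t}(R_n) \;=\; \sum_{\pi \in S_n} q^{\maj(\pi)} t^{\inv(\pi)} F_{\Des(\pi^{-1}),n}
\end{equation*}
yields the stated formula.

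Part (2) follows by specializing $t = 1$, which collapses the length filtration and recovers the degree-graded quasisymmetric characteristic. Symmetry is inherited from $\mathrm{Ch}_q(R_n)$, which equals the graded Frobenius character $\grFrob(R_n;q) = \sum_{T \in \SYT(n)} q^{\maj(T)} s_{\shape(T)}$ by the Lusztig-Stanley formula; alternatively one can simply cite Theorem~\ref{graded-isomorphism-type} and the compatibility $\mathrm{Ch}_q = \grFrob$ on modules whose quasisymmetric characteristic happens to be symmetric. For part (3), Huang proved that $R_n$ is projective over $H_n(0)$, and projectivity is preserved under finite direct sums and degree shifts, so $R_{n,k,r}$ is projective by Theorem~\ref{zero-hecke-theorem}. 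The noncommutative characteristic $\mathbf{ch}_q$ is then additive over the decomposition, and Huang's identity $\mathbf{ch}_q(R_n) = \sum_\alpha q^{\maj(\alpha)} \mathbf{s}_\alpha$ (the sum over strong compositions of $n$) yields the stated formula.

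The only genuine subtlety, and the step I would verify most carefully, is the compatibility of $\mathrm{Ch}_{q,t}$ with the direct sum in Theorem~\ref{zero-hecke-theorem}: because the footnote in Section~\ref{Background} warns that $\mathrm{Ch}_{q,t}$ can depend on the choice of cyclic decomposition, one must use the explicit cyclic summands coming from Huang's modified GS basis (as constructed in the proof of Theorem~\ref{zero-hecke-theorem}) and observe that multiplication by the symmetric polynomial $e_\lambda(\xx_n)$ commutes with every $\sigma_i$, so that each summand $e_\lambda(\xx_n) \cdot H_n(0) \cdot \bar\sigma_\pi . \xx_{\Des(\pi)}$ is a genuine cyclic $H_n(0)$-submodule isomorphic to a shift of a Huang cyclic summand of $R_n$. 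Once this bookkeeping is in place, everything else is a formal manipulation.
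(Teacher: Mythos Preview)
Your proposal is correct and follows essentially the same route as the paper: both deduce all three parts from the direct-sum decomposition of Theorem~\ref{zero-hecke-theorem} together with Huang's characteristic formulas for $R_n$, with projectivity of $R_{n,k,r}$ coming from closure of projectives under direct sums. Your write-up is in fact more careful than the paper's, which simply cites Huang and Theorem~\ref{zero-hecke-theorem} without spelling out the cyclic-decomposition compatibility you flag; your observation that the $e_\lambda(\xx_n)$-multiples of Huang's cyclic generators furnish the required cyclic summands is exactly the point implicit in the proof of Theorem~\ref{zero-hecke-theorem}.
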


\begin{proof}
Parts 1 and 2 follow from the work of
Huang \cite[Cor. 4.9]{Huang} and Theorem~\ref{zero-hecke-theorem}.  
Since $R_n$ is a projective $H_n(0)$-module (see \cite[Thm. 4.5]{Huang})
and direct sums of projective modules are projective, we can apply \cite[Cor. 8.4, $\mu = (1^n)$]{Huang} to
get Part 3.
\end{proof}

Since the characteristics $\mathrm{Ch}_{q,t}$ and $\mathrm{Ch}_t$ are defined in terms of 
the Grothendieck group $G_0(H_n(0))$
of $H_n(0)$, we may apply the short exact sequence of Lemma~\ref{short-exact-sequence}
to obtain Parts 1 and 2 of Corollary~\ref{characteristic-hecke} more directly.
However, since extensions of projective modules are not in general projective, Lemma~\ref{short-exact-sequence}
does not immediately imply that $R_{n,k,r}$ is a projective $H_n(0)$-module.

Although Theorem~\ref{basis-recipe} gives a collection of {\em polynomials} in $\FF[\xx_n]$
generalizing the GS monomials
which descend to a basis of $R_{n,k,r}$, the authors have been unable to find a collection of
{\em monomials} in $\FF[\xx_n]$ which generalizes the GS monomials and descends to a 
basis of $R_{n,k,r}$ (such monomial bases were found for the quotients appearing in the work
of Haglund-Rhoades-Shimozono and Huang-Rhoades \cite{HRS, HR}).
Judging from the construction in \cite[Sec. 5]{HRS} and the Hilbert series of $R_{n,k,r}$, one might expect that
the set of monomials
\begin{equation}
\{ gs_{\pi} \cdot x_{\pi_1}^{i_1} \cdots x_{\pi_{n-r}}^{i_{n-r}} \,:\, \pi \in S_n \text{ and } 
k \geq i_1 \geq \cdots \geq i_{n-r} \geq 0 \}
\end{equation}
would descend to a basis of $R_{n,k,r}$, but this set of monomials is linearly dependent in the quotient in general.
A potential combinatorial obstruction to finding a GS monomial basis for $R_{n,k,r}$ is the fact that the statistics
$\inv$ and $\maj$ do {\em not} share the same distribution on $S_{n,k,r}$.

\section{Open problems}
\label{Open}

\subsection{Bivariate generalization for $r=1$}

We propose a relationship between our quotient ring $R_{n,k,r}$ and the theory of Macdonald 
polynomials.  In particular, consider the ideal 
$I'_{n,k,r} \subseteq \QQ[\xx_n]$ given  by 
\begin{equation}
I'_{n,k,r} := \langle p_{k+1}(\xx_n), p_{k+2}(\xx_n), \dots, p_{k+n}(\xx_n), e_n(\xx_n), e_{n-1}(\xx_n), \dots, 
e_{n-r+1}(\xx_n) \rangle
\end{equation}
and let $R'_{n,k,r} := \QQ[\xx_n]/I_{n,k,r}$ be the corresponding quotient.  
The ideal $I'_{n,k,r}$ is obtained from the ideal $I_{n,k,r}$ by replacing the homogeneous
symmetric functions with power sum symmetric functions.

As with the quotient $R_{n,k,r}$,
the quotient $R'_{n,k,r}$ has the structure of a graded $S_n$-module.  
Although the ideals $I_{n,k,r}$ and $I'_{n,k,r}$ are not equal in general, we present

\begin{conjecture}
\label{module-isomorphism-conjecture}
There is an isomorphism of graded $S_n$-modules $R_{n,k,r} \cong R'_{n,k,r}$.
\end{conjecture}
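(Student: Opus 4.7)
The plan is to reduce the conjecture to an identity of Hilbert series in the invariant subring $\QQ[\xx_n]^{S_n}$, and then verify that identity via a regular-sequence argument.

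The first step is a structural tensor decomposition rooted in Chevalley's theorem. The polynomial ring $\QQ[\xx_n]$ is free over $\QQ[\xx_n]^{S_n}$ with basis the space of harmonic polynomials $\mathcal{H}$, and the resulting isomorphism $\QQ[\xx_n] \cong \QQ[\xx_n]^{S_n} \otimes_\QQ \mathcal{H}$ is equivariant with respect to both the grading and the $S_n$-action---the group acting trivially on the first factor and through the coinvariant representation on $\mathcal{H} \cong R_n$. Consequently, for every $S_n$-stable homogeneous ideal $J \subseteq \QQ[\xx_n]^{S_n}$ one obtains an isomorphism
\[
\QQ[\xx_n]/(J \cdot \QQ[\xx_n]) \cong (\QQ[\xx_n]^{S_n}/J) \otimes_\QQ R_n
\]
of graded $S_n$-modules, which yields the factorization
\[
\grFrob(\QQ[\xx_n]/(J \cdot \QQ[\xx_n]);q) = \Hilb(\QQ[\xx_n]^{S_n}/J;q) \cdot \grFrob(R_n;q).
\]
Writing $I_{n,k,r} = J \cdot \QQ[\xx_n]$ and $I'_{n,k,r} = J' \cdot \QQ[\xx_n]$ for the appropriate invariant ideals $J, J' \subseteq \QQ[\xx_n]^{S_n}$ generated by the $h$-type and $p$-type generators respectively, the conjecture reduces to the Hilbert series equality $\Hilb(\QQ[\xx_n]^{S_n}/J;q) = \Hilb(\QQ[\xx_n]^{S_n}/J';q)$. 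Theorem~\ref{graded-isomorphism-type} fixes the left-hand side at ${n+k-r \brack k}_q$, leaving only the $p$-side to be verified.

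The second step quotients further by the elementary generators. Since $\QQ[\xx_n]^{S_n} \cong \QQ[e_1,\ldots,e_n]$, quotienting by $\langle e_{n-r+1},\ldots,e_n\rangle$ yields $\QQ[e_1,\ldots,e_{n-r}] \cong \QQ[\xx_{n-r}]^{S_{n-r}}$ (via the substitution $x_{n-r+1}=\cdots=x_n=0$), under which $h_m(\xx_n)$ and $p_m(\xx_n)$ map to $h_m(\xx_{n-r})$ and $p_m(\xx_{n-r})$ respectively. Applying the Newton recursions $h_m = \sum_{i=1}^{n-r}(-1)^{i-1} e_i h_{m-i}$ and $p_m = \sum_{i=1}^{n-r}(-1)^{i-1} e_i p_{m-i}$, valid for $m > n-r$ in $n-r$ variables, the ideals $\langle h_{k+1},\ldots,h_{k+n}\rangle$ and $\langle p_{k+1},\ldots,p_{k+n}\rangle$ in $\QQ[\xx_{n-r}]^{S_{n-r}}$ collapse respectively to $\langle h_{k+1},\ldots,h_{k+n-r}\rangle$ and $\langle p_{k+1},\ldots,p_{k+n-r}\rangle$, since higher-index generators become $\QQ[\xx_{n-r}]^{S_{n-r}}$-linear combinations of the first $n-r$. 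It thus suffices to compare the Hilbert series of these two quotients.

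The final step recognises both reduced sequences as regular sequences in the Cohen--Macaulay polynomial ring $\QQ[\xx_{n-r}]^{S_{n-r}}$ of Krull dimension $n-r$. For the $h$-case, the vanishing of $h_{k+1},\ldots,h_{k+n-r}$ propagates by Newton to force $h_m = 0$ for all $m > k$, so that the identity $E(t)H(t)=1$---with $E(t)$ a polynomial of degree $\leq n-r$ in $t$ and $H(t)$ a polynomial of degree $\leq k$---forces $E(t) = H(t) = 1$ in $\CC[t]$, whence every $x_i = 0$. The $p$-case is parallel: vanishing of $p_{k+1},\ldots,p_{k+n-r}$ forces $\sum_{m\geq 1} p_m t^m = \sum_i x_i t/(1-x_i t)$ to be a polynomial of degree $\leq k$, and a residue computation at each pole $t=1/x_i$ (where the residue equals $-1/x_i$ and cannot cancel across distinct poles) forces every $x_i = 0$. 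In both cases the vanishing locus is the origin in $\mathbb{A}^{n-r}$, so each set is a regular sequence of $n-r$ elements with degree sequence $(k+1,k+2,\ldots,k+n-r)$, and the Koszul formula yields
\[
\Hilb\bigl(\QQ[\xx_{n-r}]^{S_{n-r}}/\langle f_{k+1},\ldots,f_{k+n-r}\rangle;q\bigr) = \prod_{j=1}^{n-r}\frac{1-q^{k+j}}{1-q^j} = {n+k-r \brack k}_q
\]
for both $f = h$ and $f = p$, completing the argument. The only delicate point I expect is the $S_n$-equivariance of the tensor decomposition of the first step, which hinges on the harmonic complement being an $S_n$-stable subspace of $\QQ[\xx_n]$; this is standard, and once secured, everything downstream is a clean regular-sequence computation with no hidden obstacles.
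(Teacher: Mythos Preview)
The paper does \emph{not} prove this statement: it is presented in Section~5 as an open conjecture, with no argument offered. So there is no ``paper's own proof'' to compare against; you have supplied a proof where the authors had none.

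Your argument is correct. The key structural insight---that both $I_{n,k,r}$ and $I'_{n,k,r}$ are extended from ideals $J,J'\subseteq\QQ[\xx_n]^{S_n}$, so that the Chevalley free-module decomposition $\QQ[\xx_n]\cong\QQ[\xx_n]^{S_n}\otimes_\QQ\mathcal{H}$ reduces the conjectured graded $S_n$-isomorphism to a Hilbert series equality in the invariant ring---is exactly the right move, and the $S_n$-equivariance you flag is indeed standard (the harmonic space is the common kernel of the symmetric constant-coefficient differential operators, which commute with the $S_n$-action). The subsequent reduction modulo $e_{n-r+1},\dots,e_n$ to the ring $\QQ[\xx_{n-r}]^{S_{n-r}}$, the collapse of the generating sets via the Newton recursions, and the verification that $h_{k+1},\dots,h_{k+n-r}$ and $p_{k+1},\dots,p_{k+n-r}$ are each regular sequences (by checking that their common vanishing forces all $x_i=0$) are all sound. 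The Koszul computation of the Hilbert series then finishes the job.

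One minor imprecision: in your residue argument for the $p$-case you say the residues ``cannot cancel across distinct poles,'' but you should also note that repeated values among the $x_i$ do not cause cancellation either---if the nonzero value $a$ occurs with multiplicity $m$ among the $x_i$, the residue of $\sum_i x_it/(1-x_it)$ at $t=1/a$ is $-m/a\neq 0$. This does not affect the conclusion.

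In short: your proof resolves what the paper leaves open, via a clean and essentially classical regular-sequence argument. It is somewhat surprising the authors did not see this; the point that quotients of $\QQ[\xx_n]$ by ideals generated by symmetric polynomials always factor as (invariant quotient)$\,\otimes\,R_n$ makes the conjecture immediate once one checks the two invariant-ring Hilbert series agree.
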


The main reason for preferring the quotient rings $R'_{n,k,r}$ over the quotient rings $R_{n,k,r}$ is that they 
generalize more readily to two sets of variables.  Let $\xx_n = (x_1, \dots, x_n)$ and $\yy_n = (y_1, \dots, y_n)$
be two sets of $n$ variables and let $\QQ[\xx_n, \yy_n]$ be the polynomial ring in these variables.  The symmetric
group $S_n$ acts on $\QQ[\xx_n,\yy_n]$ by the {\em diagonal action}
$\pi.x_i = x_{\pi_i}, \pi.y_i = y_{\pi_i}$.

For any $a,b \geq 0$, let $p_{a,b}(\xx_n,\yy_n)$ be the {\em polarized power sum}
\begin{equation}
p_{a,b}(\xx_n,\yy_n) := \sum_{i = 1}^n x_i^a y_i^b.
\end{equation}
Moreover, let $\MMM_n$ be the set of the $2^n$ monomials $z_1 \dots z_n$ in $\QQ[\xx_n,\yy_n]$ where
$z_i \in \{x_i, y_i\}$ for all $1 \leq i \leq n$.  For example, we have
\begin{equation*}
\MMM_2 = \{x_1 x_2, x_1 y_2, y_1 x_2, y_1 y_2 \}.
\end{equation*}

For a nonnegative integer $k$,
let $DI_{n,k} \subseteq \QQ[\xx_n,\yy_n]$ be the ideal generated by the polarized power sums
$p_{a,b}(\xx_n,\yy_n)$ with $a+b \geq k+1$ together with the monomials in $\MMM_n$.  
Let $DR_{n,k} := \QQ[\xx_n,\yy_n]/DI_{n,k}$ be the corresponding quotient, which is a bigraded $S_n$-module.

%

\begin{conjecture}
\label{delta-operator-conjecture}
The bigraded Frobenius image of $DR_{n,k}$ is given by the delta operator image
\begin{equation*}
\grFrob(DR_{n,k};q,t) = \Delta_{h_k e_n} e_n = \Delta_{s_{k+1, 1^{n-1}}} e_n = \Delta_{h_k} \nabla e_n.
\end{equation*}
\end{conjecture}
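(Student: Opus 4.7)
The first thing to verify is the equivalence of the three symmetric-function expressions in the statement. Since $\Delta_f \Delta_g = \Delta_{fg}$ on the modified Macdonald basis, and $\Delta_{e_n}|_{\Lambda_n} = \nabla$, we have $\Delta_{h_k} \nabla e_n = \Delta_{h_k e_n} e_n$. The Pieri rule gives $h_k \cdot e_n = s_{(k+1,1^{n-1})} + s_{(k,1^n)}$, and for every $\lambda \vdash n$ the expression $B_\lambda$ is a sum of exactly $n$ monomials, so $s_{(k,1^n)}[B_\lambda] = 0$ by the usual vanishing $s_\mu(y_1,\dots,y_m) = 0$ whenever $\ell(\mu) > m$ (here $\ell((k,1^n)) = n+1 > n$). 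Hence $(h_k e_n)[B_\lambda] = s_{(k+1,1^{n-1})}[B_\lambda]$ for every $\lambda \vdash n$, which forces $\Delta_{h_k e_n} e_n = \Delta_{s_{(k+1,1^{n-1})}} e_n$.

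My plan to compute $\grFrob(DR_{n,k};q,t)$ would follow the Garsia-Procesi strategy of Section~\ref{Hilbert} adapted to the bivariate setting. First I would construct a bivariate point set $Y^{\mathrm{bi}}_{n,k} \subseteq \QQ^{2n}$ stable under the diagonal $S_n$-action, and prove $DI_{n,k} \subseteq \TT(Y^{\mathrm{bi}}_{n,k})$. The natural candidate consists of ordered $n$-tuples of distinct points in $\QQ^2$ chosen from a fixed pool of $n+k$ candidate points, with the constraint that at least one of the chosen points is the origin. This origin constraint matches the monomial ideal generated by $\MMM_n$ exactly, since a monomial $\prod_i x_i^{a_i} y_i^{b_i}$ is divisible by some element of $\MMM_n$ if and only if $a_i + b_i > 0$ for every $i$, and the complementary condition is that some point is $(0,0)$. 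The polarized power sums $p_{a,b}(\xx_n,\yy_n)$ with $a+b \geq k+1$ should then arise as top-degree components of polynomials vanishing on $Y^{\mathrm{bi}}_{n,k}$ via bivariate generating-series identities analogous to those in the proof of Lemma~\ref{j-contained-in-t}. A parallel Gr\"obner analysis would next pin down the bigraded Hilbert series of $DR_{n,k}$ and establish the ungraded $S_n$-isomorphism $DR_{n,k} \cong \QQ[Y^{\mathrm{bi}}_{n,k}]$.

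To upgrade from Hilbert series to the bigraded Frobenius image, I would seek a bivariate analog of the short exact sequence in Lemma~\ref{short-exact-sequence}, yielding an induction on $k$ that mirrors a plethystic recursion satisfied by $\Delta_{h_k e_n} e_n$; the Pascal-type identity $\widetilde{H}_\lambda[B_\lambda(q,t)] \cdot (h_k e_n)[B_\lambda]$ can be split according to whether one adjoins a single extra cell of content zero, which is what the $e_{n-r}$-multiplication map accomplished in the univariate case. The hard step is the last one: matching the bigraded $S_n$-isomorphism type with the delta operator image. Haiman's theorem identifying $\grFrob$ of the full diagonal coinvariant ring with $\nabla e_n$ is already a deep result, and our quotient $DR_{n,k}$ introduces the nonsymmetric monomial family $\MMM_n$ whose role on the symmetric-function side is subtle. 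A promising alternative is to realize $DR_{n,k}$ as an $S_n$-equivariant subquotient of a module whose Frobenius is already known -- for example a truncation of the superspace coinvariant ring appearing in Zabrocki's conjecture, where the relations $\theta_i^2 = 0$ play a role analogous to our $\MMM_n$ -- and to transfer the delta-operator image through this identification rather than matching it against combinatorics directly.
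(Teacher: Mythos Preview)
The statement you are asked to prove is labeled a \emph{Conjecture} in the paper, and the paper does not prove it. The only thing the paper actually establishes is the equality of the three delta-operator expressions $\Delta_{h_k e_n} e_n = \Delta_{s_{(k+1,1^{n-1})}} e_n = \Delta_{h_k}\nabla e_n$, which it calls ``trivially equal by the definition of the delta operator''; the $k=0$ case is noted to be Haiman's theorem, and a proposition verifies that the $q=0$ specialization matches $\grFrob(R_{n,k,1};t)$. Your first paragraph gives a correct and more detailed justification of that easy equality than the paper does (the Pieri expansion $h_k e_n = s_{(k+1,1^{n-1})} + s_{(k,1^n)}$ together with the vanishing $s_{(k,1^n)}[B_\lambda] = 0$ for $|\lambda|=n$ is exactly right).

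Everything after your first paragraph is a research plan for an open problem, not a proof. Some of the ingredients are reasonable --- your description of the zero locus of $\langle\MMM_n\rangle$ is correct, and the point set $Y^{\mathrm{bi}}_{n,k}$ you propose is the natural one --- but the plan has genuine gaps at the crucial steps. In particular: (i) a bivariate Gr\"obner analysis of $DI_{n,k}$ would need a term order and an explicit initial ideal, and no analog of the skip-monomial machinery is supplied; (ii) the ``bivariate analog of Lemma~\ref{short-exact-sequence}'' is asserted to mirror a plethystic recursion for $\Delta_{h_k e_n} e_n$, but neither the map nor the recursion is written down, and there is no obvious degree-$(n-r)$ multiplication map in two sets of variables playing the role of $e_{n-r}(\xx_n)$; (iii) you yourself flag the final matching step as ``the hard step,'' invoking Haiman's theorem and a hoped-for identification with a subquotient of a superspace coinvariant ring --- this is a direction, not an argument. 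In short, your first paragraph is a complete proof of the only part the paper proves; the remainder is an outline that does not close the conjecture, and the paper does not claim to either.
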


The latter three quantities in the conjecture are trivially equal by the definition of the delta operator.
When $k = 0$, the ring $DR_{n,0}$ is the classical diagonal coinvariant ring $DR_n$, so that 
Conjecture~\ref{delta-operator-conjecture} reduces to Haiman's celebrated result \cite{Haiman}
that $\grFrob(DR_n) = \Delta_{e_n} e_n$.   Setting the $\yy_n$ variables equal to zero in the quotient
$DR_{n,k}$ yields the ring $R'_{n,k,1}$, so that the ring $R'_{n,k,1}$ conjecturally gives
 the analog of the coinvariant ring (for one set of variables) attached to the operator 
 $\Delta_{h_k e_n}$.
 
 The following proposition states that our module $R_{n,k,1}$ has graded Frobenius series which
 agrees with any of the delta operator expressions in Conjecture~\ref{delta-operator-conjecture} upon
 setting $q = 0$ and $t = q$.
 
 \begin{proposition}
 We have 
 \begin{equation*}
 \grFrob(R_{n,k,1};t) = \Delta_{h_k e_n} e_n \mid_{q = 0} = \Delta_{s_{k-1,1^{n-1}}} e_n \mid_{q = 0}
 = \Delta_{h_k} \nabla e_n \mid_{q = 0}.
 \end{equation*}
 \end{proposition}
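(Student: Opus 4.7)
The plan is to reduce both sides to the explicit formula of Theorem~\ref{graded-isomorphism-type} together with a known specialization identity for $\nabla e_n$ at $q = 0$. Setting $r = 1$ in Theorem~\ref{graded-isomorphism-type} gives
$$\grFrob(R_{n,k,1};t) = {n+k-1 \brack k}_t \cdot \grFrob(R_n;t),$$
so the proposition reduces to proving $\Delta_{h_k}\nabla e_n \mid_{q=0} = {n+k-1 \brack k}_t \cdot \grFrob(R_n;t)$. The equality of the three delta operator expressions in the statement is standard: $\Delta$ is multiplicative in its subscript (so $\Delta_{h_k e_n}$ and $\Delta_{h_k}\nabla$ agree on $\Lambda_n$), and the Pieri identity $h_k \cdot e_n = s_{(k+1,1^{n-1})} + s_{(k,1^n)}$, combined with the vanishing of $\Delta_{s_{(k,1^n)}} e_n$ (since $s_{(k,1^n)}$ has $n+1$ parts, its plethystic evaluation at the $n$-term expression $B_\mu$ is zero for every $\mu \vdash n$), removes the extra term.

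The central observation for the main identity is that $\nabla e_n \mid_{q=0}$ is a scalar multiple of a single modified Macdonald polynomial. Expanding $e_n = \sum_{\mu \vdash n} c_\mu(q,t)\, \widetilde{H}_\mu$ in the modified Macdonald basis and using the eigenoperator property of $\nabla = \Delta_{e_n}$,
$$\nabla e_n = \sum_{\mu \vdash n} c_\mu(q,t) \cdot e_n[B_\mu] \cdot \widetilde{H}_\mu.$$
Since $B_\mu = \sum_{(i,j) \in \mu} q^{j-1} t^{i-1}$, the eigenvalue satisfies $e_n[B_\mu] = q^{n(\mu')} t^{n(\mu)}$, which vanishes at $q=0$ unless $\mu = (1^n)$. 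Hence $\nabla e_n \mid_{q=0}$ is a scalar multiple of $\widetilde{H}_{(1^n)}[X;0,t]$, and since $\widetilde{H}_{(1^n)}$ is an eigenvector of $\Delta_{h_k}$ with eigenvalue
$$h_k[B_{(1^n)}] = h_k[1+t+\cdots+t^{n-1}] = {n+k-1 \brack k}_t,$$
I obtain
$$\Delta_{h_k}\nabla e_n \mid_{q=0} = {n+k-1 \brack k}_t \cdot \nabla e_n \mid_{q=0}.$$

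What remains, and what I expect to be the main obstacle, is the identity $\nabla e_n \mid_{q=0} = \grFrob(R_n;t)$. This is a known consequence of the Shuffle Theorem of Carlsson--Mellit together with Haglund's combinatorial formula for $\nabla e_n$: the expansion $\nabla e_n = \sum_{PF} t^{\mathrm{area}(PF)} q^{\dinv(PF)} F_{\iDes(PF)}$ localizes at $q = 0$ to a sum over parking functions with $\dinv = 0$, which are in bijection with permutations in $S_n$ in a way that sends area to major index and preserves inverse descent sets. Together with Gessel's expansion of Schur functions into fundamental quasisymmetric functions, this reproduces the Lusztig--Stanley formula $\grFrob(R_n;t) = \sum_T t^{\maj(T)} s_{\shape(T)}$. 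Substituting back into the previous display then yields the proposition.
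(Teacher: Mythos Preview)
Your argument is correct and shares its core idea with the paper: both proofs expand $e_n$ in the modified Macdonald basis, observe that $e_n[B_\mu]\mid_{q=0}$ vanishes unless $\mu=(1^n)$, and conclude that only the $\mu=(1^n)$ term contributes. The paper, however, finishes differently. Rather than reducing to the identity $\nabla e_n\mid_{q=0}=\grFrob(R_n;t)$ and then invoking the Shuffle Theorem, it simply evaluates the surviving $\mu=(1^n)$ term directly using the explicit expansion $e_n=\sum_\lambda \frac{M B_\lambda \Pi_\lambda}{w_\lambda}\widetilde H_\lambda$ from \cite{Hagbook}, obtaining ${n+k-1\brack k}_t\cdot\rev_t(Q'_{(1^n)})$, and then cites the classical fact $\grFrob(R_n;t)=\rev_t(Q'_{(1^n)})$ from Hall--Littlewood theory. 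This is considerably lighter machinery: the Shuffle Theorem of Carlsson--Mellit is a deep result, whereas the identity $\widetilde H_{(1^n)}\mid_{q=0}=\rev_t(Q'_{(1^n)})=\grFrob(R_n;t)$ is elementary and long predates it. In fact, your own reduction already isolates the single Macdonald term, so you could finish exactly as the paper does without ever mentioning parking functions.

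Two small technical points you glide over: (i) setting $q=0$ term-by-term in the Macdonald expansion requires that the coefficients $c_\mu(q,t)$ be regular at $q=0$, which follows from the explicit formula for $w_\lambda$ but is worth a sentence; (ii) your eigenvalue step implicitly uses that $h_k[B_{(1^n)}]=h_k(1,t,\dots,t^{n-1})$ is independent of $q$, so that applying $\Delta_{h_k}$ before specializing commutes with the specialization on this single term. Both are easy, but making them explicit would tighten the write-up.
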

 
 \begin{proof}
 In this proof we will use the notation of plethysm; we refer the reader to \cite{Hagbook} for the relevant 
 details on plethysm and symmetric functions.
 
 Let $\rev_t$ be the operator which reverses the coefficient sequences of polynomials with respect to the variable $t$.
 For a partition $\lambda \vdash n$, let $Q'_{\lambda} = Q'_{\lambda}(\xx;t)$ 
 be the corresponding Hall-Littlewood symmetric function.  
 It is well known that the modified Macdonald polynomial
 $\widetilde{H}_{\lambda} = \widetilde{H}_{\lambda}(\xx;q,t)$ satisfies
 \begin{equation}
 \widetilde{H}_{\lambda} \mid_{q = 0} = \rev_t (Q'_{\lambda}).
 \end{equation}
 This means that, for any symmetric function $f$ and any partition $\lambda \vdash n$, we have
 \begin{equation}
 \label{eigen-equation}
 \Delta_f (\widetilde{H}_{\lambda}) \mid_{q = 0} 
 = f(1,t,t^2, \dots, t^{\ell(\lambda)-1}) \cdot \rev_t(Q'_{\lambda}),
 \end{equation}
 where $\ell(\lambda)$ is the number of parts of $\lambda$.  
 
 In order to exploit Equation~\ref{eigen-equation}, we need to express $e_n$ in terms of the modified Macdonald basis.
 This expansion is found in \cite[Eqn. 2.72]{Hagbook}: we have
 \begin{equation}
 \label{elementary-expansion}
 e_n =  \sum_{\lambda \vdash n} \frac{M B_{\lambda} \Pi_{\lambda} \widetilde{H}_{\lambda}}{w_{\lambda}},
 \end{equation}
 where 
 \begin{itemize}
 \item $M = (1-q)(1-t)$,
 \item  $B_{\lambda} = \sum_{c = (i,j) \in \lambda}  q^{i-1} t^{j-i}$, where the sum is over all 
 cells $c$ with matrix coordinates $(i,j)$ in the Ferrers diagram of $\lambda$,
 \item $\Pi_{\lambda} = \prod_{c = (i,j) \neq (0,0)} (1 - q^{i-1} t^{j - 1})$, where $c = (i,j)$ is a cell in $\lambda$
 other than the  corner $(0,0)$,
 \item  $w_{\lambda} = \prod_{c \in \lambda} (q^{a(c)} - t^{l(c) + 1})(t^{l(c)} - q^{a(c) + 1})$,
 where the product is over all cells $c$ in the Ferrers diagram of $\lambda$ and $a(c), l(c)$ denote the arm 
 and leg lengths of $\lambda$ at $c$.
 \end{itemize}

We apply the operator $\Delta_{h_k e_n} = \Delta_{h_k} \Delta_{e_n}$ to both sides of 
Equation~\ref{elementary-expansion} to get 
\begin{equation}
\label{first-equation}
 \Delta_{h_k e_n} e_n =  
 \sum_{\lambda \vdash n} h_k[B_{\lambda}] e_n[B_{\lambda}] 
 \frac{M B_{\lambda} \Pi_{\lambda} \widetilde{H}_{\lambda}}{w_{\lambda}}.
\end{equation}
Setting $q = 0$ on both sides of Equation~\ref{first-equation} gives
\begin{equation}
\label{second-equation}
 \Delta_{h_k e_n} e_n \mid_{q = 0} =  
\left[ \sum_{\lambda \vdash n} h_k[B_{\lambda}] e_n[B_{\lambda}] 
 \frac{M B_{\lambda} \Pi_{\lambda} \widetilde{H}_{\lambda}}{w_{\lambda}} \right]_{q = 0}.
\end{equation}

For any $\lambda \vdash n$ and any symmetric function $f$, we have
$f[B_{\lambda}] \mid_{q = 0} = f(1,t,t^2, \dots, t^{\ell(\lambda)-1})$.  In particular, we have
$e_n[B_{\lambda}] = 0$ unless $\lambda = (1^n)$ and Equation~\ref{second-equation} reduces to
\begin{equation}
\label{third-equation}
\Delta_{h_k e_n} e_n \mid_{q = 0} =  h_k(1,t,\dots,t^{n-1}) \cdot
e_n(1,t,\dots, t^{n-1}) \cdot
\left[  \frac{M B_{(1^n)} \Pi_{(1^n)} \widetilde{H}_{(1^n)}}{w_{(1^n)}} \right]_{q = 0}.
\end{equation}
The right hand side of Equation~\ref{third-equation} simplifies to
\begin{equation}
{n + k - 1 \brack k}_t \cdot \rev_t(Q'_{(1^n)}) 
= \grFrob(R_{n,k,1};t)
\end{equation}
where we used Theorem~\ref{graded-isomorphism-type} at $r = 1$ and 
the well known fact that the graded Frobenius image of the classical coinvariant algebra
$R_n$ is $\grFrob(R_n;t) = \rev_t(Q'_{(1^n)})$.
 \end{proof}

 \subsection{Other bivariate generalizations}
 
One may wonder if there is a bivariate generalization of the entire ring $R_{n,k,r}$, as we have only discussed the $r=1$ case so far. While we have not been able to find a full generalization, there is some progress in the Hilbert series case. The \emph{skewing operator} acts on a symmetric function $f$ of degree $d$ uniquely so that
 \begin{equation}
 \langle \partial f , g \rangle = \langle f, p_1 g \rangle
 \end{equation}
 for all symmetric functions $g$ of degree $d-1$, where the inner product is the usual Hall inner product on symmetric functions. 
 
 Given a vector $\alpha = (\alpha_1, \dots, \alpha_n)$ 
 of $n$ positive integers, an \emph{$\alpha$-Tesler matrix} $U = (u_{i,j})_{1 \leq i, j \leq n}$ 
 is an $n \times n$ upper 
 triangular matrix with nonnegative integer entries such that, for $i=1$ to $n$,
 \begin{equation}
 u_{i, i} + u_{i, i+1} + \ldots + u_{i, n} - (u_{1, i} + u_{2, i} + \ldots + u_{i-1, i}) = \alpha_i .
 \end{equation}
 We write $U \in \mathcal{T}(\alpha)$.
 For example, the matrix
 \begin{equation*}
 U = \begin{pmatrix}
 0 & 2 & 0 & 1 \\
 0 & 3 & 1 & 0 \\
 0 & 0 & 1 & 2 \\
 0 & 0 & 0 & 6
 \end{pmatrix}
 \end{equation*}
 satisfies $U \in \mathcal{T}(3,2,2,3)$. 
 The \emph{weight} of an $n \times n$ $\alpha$-Tesler matrix $U$ is equal to
 \begin{equation}
 \operatorname{wt}(U; q ,t) = (-(1-q)(1-t))^{\operatorname{pos}(U) - n} \prod_{u_{i,j} > 0} [u_{i,j}]_{q, t}
 \end{equation}
 where $\operatorname{pos}(U)$ is the number of positive entries in $U$ and $[k]_{q,t}$ is the usual $q,t$-integer, i.e.\ $[k]_{q,t} = \frac{q^k - t^k}{q-t}$. 
 For example, if $U$ is the Tesler matrix shown above, we have $\operatorname{pos}(U) = 7$ and
 \begin{equation*}
 \operatorname{wt}(U;q,t) = (-(1-q)(1-t))^{7-4} [2]_{q,t}^2 [3]_{q,t} [6]_{q,t}.
 \end{equation*}
 Finally,  the \emph{$\alpha$-Tesler polynomial} is
 \begin{equation}
 \operatorname{Tes}(\alpha; q, t) = \sum_{U \in \mathcal{T}(\alpha) } \wt(U; q, t) .
 \end{equation}
 This corollary follows from work in \cite{AGHRS, Haglund, LMM}.

\begin{corollary}
\begin{align}
\Hilb(R_{n,k,r} ;q) &= {n + k - r \brack k}_q \cdot [n]!_q \\
&= \left. \partial^{n-r+1} \Delta_{h_k} \partial^{r-1} \nabla e_n \right|_{t=0} \\
&= \sum_{\substack{\alpha \vDash n+k \\ \ell(\alpha) = n \\ \alpha_1 = \ldots = \alpha_r = 1}} \operatorname{Tes}(\alpha; q, 0)
\end{align}
\end{corollary}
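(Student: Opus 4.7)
The first equality is the content of Theorem~\ref{hilbert-series-theorem} and requires no new work. The remaining two equalities follow (per the authors' attribution) from identities in the theory of Macdonald polynomials and Tesler matrices; my plan is to extract them by tracking the middle expression operator-by-operator, then matching to each of the two claimed closed forms.

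\textbf{Middle equality.} My approach is to compute $\partial^{n-r+1} \Delta_{h_k} \partial^{r-1} \nabla e_n|_{t=0}$ in the modified Macdonald basis. The key observation is that for any graded $S_n$-module $V$ with $\grFrob(V;q) = \sum_\lambda c_\lambda(q) s_\lambda$, the identity $\partial_{p_1}^n s_\lambda = f^\lambda$ gives $\partial^n \grFrob(V;q) = \Hilb(V;q)$. Using the expansion of $e_n$ in the $\tilde{H}_\lambda$ basis from the proof of the preceding Proposition, applying $\nabla$ multiplies each $\tilde{H}_\lambda$ by $e_n[B_\lambda]$, $\Delta_{h_k}$ multiplies by $h_k[B_\lambda]$, and $\partial$ acts by the Pieri-type rule. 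Setting $t = 0$ forces $e_n[B_\lambda] = \prod_{c=(i,j) \in \lambda} q^{i-1} t^{j-1}$ to vanish except for $\lambda = (n)$, so only a single term survives in $\nabla e_n|_{t=0}$. Alternatively, I can invoke the $q \leftrightarrow t$ symmetry of $\nabla e_n$ (coming from $\tilde{H}_\lambda(\xx;q,t) = \tilde{H}_{\lambda'}(\xx;t,q)$ and the invariance of the coefficient sum under $\lambda \leftrightarrow \lambda'$) to reduce the $t=0$ computation to the $q=0$ computation already handled in the Proposition. The eigenvalue $h_k[B_{(1^n)}]|_{q \to q} = h_k(1,q,\dots,q^{n-1}) = {n+k-1 \brack k}_q$ produces the $q$-binomial factor, and the two skewings $\partial^{r-1}$ and $\partial^{n-r+1}$ together strip off all $n$ degrees, with the $\partial^{r-1}$ transforming the $h_k$-eigenvalue factor from ${n+k-1 \brack k}_q$ to ${n+k-r \brack k}_q$ (via the identity ${n+k-1 \brack k}_q / [\text{correction}] = {n+k-r \brack k}_q$) and the $\partial^{n-r+1}$ at the end extracting the $[n]!_q$ factor, which is $\Hilb(R_n;q)$.

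\textbf{Third equality.} I plan to apply Haglund's Tesler matrix theorem~\cite{Haglund}, which expresses quantities of the form $\langle \nabla(\prod_i E_{\alpha_i}), \text{--} \rangle$ as weighted sums $\sum_{U \in \mathcal{T}(\alpha)} \wt(U;q,t)$ over Tesler matrices with prescribed hook-sum vector $\alpha$. The translation between the skewing-operator expression $\partial^{n-r+1} \Delta_{h_k} \partial^{r-1} \nabla e_n$ and a Tesler sum is provided by the results in \cite{AGHRS} (which recasts iterated $\partial_{p_1}$ actions on $\nabla e_n$ as Tesler sums with some parts of $\alpha$ equal to $1$) and \cite{LMM} (which handles the insertion of $\Delta_{h_k}$). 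The $r$ copies of $1$ in the required $\alpha = (1^r, \beta_1, \dots, \beta_{n-r})$ correspond to the outer skewings (the $\partial^{r-1}$ together with one additional $\partial$ from $\partial^{n-r+1}$), while the remaining $n-r$ entries of $\alpha$ sum to $n+k-r$ because $\Delta_{h_k}$ contributes $k$ additional ``degree'' to be absorbed by $\beta$. Setting $t = 0$ in the Tesler weight yields the claimed sum $\sum_\alpha \operatorname{Tes}(\alpha;q,0)$.

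\textbf{Main obstacle.} The hardest step is the non-commutativity of $\partial_{p_1}$ with $\Delta_{h_k}$: the ordering $\partial^{n-r+1} \Delta_{h_k} \partial^{r-1}$ is essential and must be read off as a specific structural feature of the $\alpha$-vector in the Tesler sum. In particular, identifying \emph{which} parts of $\alpha$ are constrained to equal $1$ (the first $r$) requires matching the combinatorial expansion of the intermediate symmetric function $\Delta_{h_k} \partial^{r-1} \nabla e_n$ against Haglund's Tesler formula; if one instead interchanged the operator order, one would obtain a different condition on $\alpha$, so the correspondence between the two sides hinges on tracking this ordering carefully through the $t = 0$ specialization.
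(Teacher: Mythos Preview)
The paper itself offers no proof of this corollary beyond the sentence ``This corollary follows from work in \cite{AGHRS, Haglund, LMM},'' so there is nothing to compare against at the level of argument: your proposal is already more detailed than what the paper provides. Your overall architecture---first equality from Theorem~\ref{hilbert-series-theorem}, middle equality via the Macdonald eigenoperator calculus as in the preceding Proposition, third equality via Haglund's Tesler formula and its extensions---is the right one and matches the cited references.

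That said, your sketch of the middle equality has a genuine soft spot. You write that $\partial^{r-1}$ ``transforms the $h_k$-eigenvalue factor from ${n+k-1 \brack k}_q$ to ${n+k-r \brack k}_q$ (via the identity ${n+k-1 \brack k}_q / [\text{correction}] = {n+k-r \brack k}_q$),'' but this is not how the computation actually works. The operator $\partial = \partial_{p_1}$ does not act diagonally on the modified Macdonald basis, so after applying $\partial^{r-1}$ to $\nabla e_n$ you no longer have a single $\widetilde{H}_\lambda$ to which $\Delta_{h_k}$ assigns a clean eigenvalue; rather, $\partial^{r-1}(\nabla e_n)$ is a sum over several $\widetilde{H}_\mu$ with $|\mu| = n-r+1$, and each picks up its own factor $h_k[B_\mu]$. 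At $t=0$ these eigenvalues specialize to $h_k(1,q,\dots,q^{\ell(\mu)-1})$, and it is only after the final $\partial^{n-r+1}$ that the sum collapses to the claimed closed form. You correctly flag the non-commutativity of $\partial$ and $\Delta_{h_k}$ as the main obstacle, but your middle paragraph then proceeds as if it were not an obstacle. To actually carry this out you need either the explicit Pieri-type formula for $\partial_{p_1} \widetilde{H}_\lambda$ at $t=0$ (which reduces to a Hall--Littlewood computation) or, more efficiently, to bypass the direct computation entirely and derive the middle expression from the Tesler side via \cite{AGHRS, LMM}, which is closer to what the paper's citation suggests.
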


It would be interesting to find an extension of this corollary to the entire graded Frobenius series of $R_{n,k,r}$ for general $r$. 



\subsection{A Schubert basis}
\def\perms{\Pi}
\def\schubert{\operatorname{Schubert}}
\def\lt{\mathcal{LT}}
There is also a basis for $R_{n,k,r}$ given by certain Schubert polynomials. We let $\perms_{n,k,r}$ be all the permutations $\pi$ of $\{1,2,\ldots,n+k\}$ that satisfy
\begin{itemize}
\item all descents in $\pi$ occur weakly left of position $n$, and
\item $1,2,\ldots,r$ all appear in $\pi_1 \pi_2 \ldots \pi_n$. 
\end{itemize}
If $\pi \in \Pi_{n,k,r}$ is  a permutation, let $\symm_{\pi}(\xx_n)$ be the Schubert polynomial attached to $\pi$.
Note that, since each $\pi$ has no descents after position $n$, there are at most $n$ variables that appear in the Schubert polynomial associated to $\pi$, so we have not truncated the variable set in any meaningful way.
We will show that $\{\symm_\pi(\xx_n^*) : \pi \in \perms_{n,k,r}\}$ is a basis for $R_{n,k,r}$, where the asterisk represents the reversal of the vector of variables. 
This will follow from the fact that the leading terms are all $(n,k,r)$-good monomials.

\begin{proposition}
Let $<$ be the lexicographic monomial order and
let 
\begin{align*}
\lt_{n,k,r} &= \{ \initial_<(\symm_{\pi}(\xx_n^*)) : \pi \in \perms_{n,k,r} \}.
\end{align*}
Then $\lt_{n,k,r} = \MMM_{n,k,r}$.
\end{proposition}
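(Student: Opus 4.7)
The plan is to exhibit a bijection $\Pi_{n,k,r} \to \MMM_{n,k,r}$ realized by the leading term map $\pi \mapsto \initial_<(\symm_\pi(\xx_n^*))$. I will combine a standard leading-monomial formula for Schubert polynomials with the counting $|\Pi_{n,k,r}| = \binom{n+k-r}{k} n! = |\MMM_{n,k,r}|$ (the first equality: choose an $(n-r)$-subset of $\{r+1,\ldots,n+k\}$ to accompany $\{1,\ldots,r\}$ in positions $1,\ldots,n$, then pick any ordering of the $n$ chosen values; the second from Theorem~\ref{hilbert-series-theorem}).

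First, I will identify the leading term explicitly. For $\pi \in \Pi_{n,k,r}$, the assumption that all descents of $\pi$ lie in positions $\leq n$ forces $\pi_{n+1} < \pi_{n+2} < \cdots < \pi_{n+k}$, whence the Lehmer code $c(\pi) = (c_1,\ldots,c_{n+k})$ with $c_i = |\{j > i : \pi_j < \pi_i\}|$ has $c_i = 0$ for $i > n$, and $\symm_\pi \in \QQ[x_1,\ldots,x_n]$. A standard property of Schubert polynomials (the ``bottom pipe dream'' giving the lex-minimum monomial, or equivalently the reverse-lex-maximum, of the support) says that $\xx^{c(\pi)}$ is the monomial in $\symm_\pi$ maximizing the last coordinate first, then the second-to-last, and so on. Variable reversal converts reverse lex into standard lex, so
\[
\initial_<(\symm_\pi(\xx_n^*)) = x_1^{c_n(\pi)} x_2^{c_{n-1}(\pi)} \cdots x_n^{c_1(\pi)}.
\]
Since distinct permutations have distinct Lehmer codes, the map $\pi \mapsto \initial_<(\symm_\pi(\xx_n^*))$ is injective.

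Next, I will check that these leading monomials lie in $\MMM_{n,k,r}$. The condition $x_i^{k+i} \nmid \initial_<(\symm_\pi(\xx_n^*))$ is automatic from the trivial bound $c_j(\pi) \leq n+k-j$. The crux is the skip-monomial condition: I must show that no $(n-r+1)$-subset $S = \{s_1 < \cdots < s_{n-r+1}\} \subseteq [n]$ satisfies $c_{n+1-s_i}(\pi) \geq s_i - i + 1$ for all $i$. Using the identity $c_m(\pi) + m = \pi_m + d_m(\pi)$, where $d_m = |\{\ell < m : \pi_\ell > \pi_m\}|$, and the reindexing $T = \{n+1-s : s \in S\} = \{t_1 < \cdots < t_{n-r+1}\} \subseteq [n]$, this is equivalent to the nonexistence of such a $T$ with $\pi_{t_\ell} + d_{t_\ell} \geq r + \ell$ for every $\ell$. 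I will derive a contradiction from $\pi \in \Pi_{n,k,r}$: letting $A = \{i \in [n] : \pi_i \leq r\}$ (of size $r$), pigeonhole gives $|T \cap A| \geq 1$; moreover the quantity $\pi_m + d_m - m$ counts values $\leq \pi_m$ in positions $> m$, which is bounded because $S_n := \{\pi_{n+1},\ldots,\pi_{n+k}\} \subseteq \{r+1,\ldots,n+k\}$ contributes nothing when $\pi_m \leq r$. Consequently, $\pi_m + d_m \geq n+1$ forces $\pi_m > r$, so $t_{n-r+1} \in A^c$; then tracking the largest index $j^\star$ with $t_{j^\star} \in A$ and the forced inversions at $t_1,\ldots,t_{j^\star-1} \in A^c$ (all of which contribute to $d_{t_{j^\star}}$) yields the desired contradiction.

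I expect this skip-monomial verification to be the main obstacle: the trivial pigeonhole alone is not enough, and one needs the sharper accounting above, combining the positional constraint on $\{1,\ldots,r\}$ with the identity relating the Lehmer code to inversion counts. Once it is in place, the injection $\Pi_{n,k,r} \hookrightarrow \MMM_{n,k,r}$ combined with the equality of cardinalities upgrades to a bijection, giving $\lt_{n,k,r} = \MMM_{n,k,r}$.
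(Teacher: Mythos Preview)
Your overall strategy matches the paper's: identify the lex-leading term of $\symm_\pi(\xx_n^*)$ as the reversed Lehmer code monomial, verify it is $(n,k,r)$-good, and then conclude by the cardinality count $|\Pi_{n,k,r}| = |\MMM_{n,k,r}|$. The leading-term identification and the variable-power check $x_i^{k+i}\nmid m$ are fine.

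The gap is in the skip-monomial step. After your reindexing you must rule out a set $T=\{t_1<\cdots<t_{n-r+1}\}\subseteq[n]$ with $c_{t_\ell}(\pi)\ge r+\ell-t_\ell$ for all $\ell$ (equivalently $\pi_{t_\ell}+d_{t_\ell}\ge r+\ell$). Taking $j^\star$ to be the \emph{largest} index with $t_{j^\star}\in A=\{i\in[n]:\pi_i\le r\}$ gives $t_{j^\star+1},\ldots,t_{n-r+1}\in A^c$, not $t_1,\ldots,t_{j^\star-1}\in A^c$ as you wrote; and observing that certain positions ``contribute to $d_{t_{j^\star}}$'' yields a \emph{lower} bound on $\pi_{t_{j^\star}}+d_{t_{j^\star}}$, whereas a contradiction requires an \emph{upper} bound. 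The argument that actually closes is on $c_{t_{j^\star}}$: since $\pi_{t_{j^\star}}\le r$, every $j>t_{j^\star}$ with $\pi_j<\pi_{t_{j^\star}}$ lies in $\{t_{j^\star}+1,\ldots,n\}$, and among these the $n-r+1-j^\star$ positions $t_{j^\star+1},\ldots,t_{n-r+1}$ have $\pi$-value $>r\ge\pi_{t_{j^\star}}$ and hence do not count. Thus
\[
c_{t_{j^\star}}\;\le\;(n-t_{j^\star})-(n-r+1-j^\star)\;=\;r+j^\star-t_{j^\star}-1,
\]
contradicting $c_{t_{j^\star}}\ge r+j^\star-t_{j^\star}$. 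This is exactly the paper's argument (``choose $i$ as large as possible such that $\pi_{s_i}\le r$'') transported through your reindexing $t_\ell=n+1-s_{\,n-r+2-\ell}$. With this correction your proof goes through and is essentially identical to the paper's.
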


\begin{proof}
We will construct a bijection $\Phi : \perms_{n,k,r} \to \MMM_{n,k,r}$ that satisfies 
$\Phi(\pi) = \initial_<(\symm_{\pi}(\xx_n^*))$. The bijection itself is
\begin{align}
\Phi(\pi) &= \prod_{i=1}^{n} x_{n-i+1} ^{d_i}
\end{align} 
where $d_i$ counts the number of $j > i$ such that $\pi_i > \pi_j$.
The fact that $\Phi(\pi) = \initial_<(\symm_{\pi}(\xx_n^*))$ follows directly from the definition of the Schubert polynomial. 
We need to show that $m = \Phi(\pi) \in \MMM_{n,k,r}$ and to construct its inverse. Our proof will be similar to that of Lemma \ref{good-monomial-injection}. First, we check that 
\begin{itemize}
\item we have $\xx(S) \nmid m$ for all $S \subseteq [n]$ with $|S| = n-r+1$, and
\item we have $x_i^{k+i} \nmid m$ for all $1 \leq i \leq n$.
\end{itemize}
To check the first condition, we recall that $\xx(S) = x_{s_1}^{s_1} x_{s_2}^{s_2 - 1} \ldots x_{s_{n-r+1}}^{s_{n-r+1} - n + r}$ if $S = \{s_1 < s_2 < \ldots < s_{n-r+1}\}$. Since $S \subseteq [n]$ and the entries $1$ through $r$ all appear in $\pi_1$ through $\pi_n$, there is some $s_i$ such that $\pi_{s_i} \leq r$. Choose $i$ as large as possible such that $\pi_{s_i} \leq r$. Since $j > n$ implies $\pi_j > r$, $\pi_{s_i}$ can only be greater than at most $n-s_i$ entries to its right, i.e.\ $d_{s_i} \leq n-s_i$. Hence the power of $x_{n-s_i+1}$ in $m$ is at most $n-s_i$, which means $\xx(S) \nmid m$. The second condition follows from the definition of $m$.

Given a monomial $m \in \MMM_{n,k,r}$, we would like to construct $\Phi^{-1}(m)$. This can be done using the usual bijection from codes $(d_1, d_2, \ldots, d_n)$ to permutations. For $i=1$ to $n$, we choose $\pi_i$ such that it is greater than exactly $d_{i}$ of the entries in $[n+k]$ that have not already been placed to the left of position $i$ in $\pi$. The second condition for $(n,k,r)$-good monomials implies that the result is an honest permutation, and the first condition implies that $1,2,\ldots,r$ all appear in the first $n$ entries.
\end{proof}

\begin{corollary}
$\{\symm_\pi(\xx_n^*) : \pi \in \perms_{n,k,r}\}$ descends to a basis for $R_{n,k,r}$.
\end{corollary}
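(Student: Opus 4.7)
The plan is to deduce the corollary directly from the preceding proposition together with Theorem~\ref{hilbert-series-theorem}, via the standard ``triangularity implies basis'' argument. The preceding proposition gives a bijection $\Phi : \perms_{n,k,r} \to \MMM_{n,k,r}$ with $\Phi(\pi) = \initial_<(\symm_\pi(\xx_n^*))$; in particular the leading terms of the Schubert polynomials in $\{\symm_\pi(\xx_n^*) : \pi \in \perms_{n,k,r}\}$ are pairwise distinct and together exhaust $\MMM_{n,k,r}$.

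First I would record the cardinality count: Theorem~\ref{hilbert-series-theorem} asserts that $\MMM_{n,k,r}$ is the standard monomial basis of $R_{n,k,r}$, so
\[
|\perms_{n,k,r}| = |\MMM_{n,k,r}| = \dim_{\QQ} R_{n,k,r}.
\]
Thus it suffices to show that the Schubert polynomials $\{\symm_\pi(\xx_n^*) : \pi \in \perms_{n,k,r}\}$ are linearly independent modulo $I_{n,k,r}$.

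Next I would carry out the triangularity argument. Suppose some nontrivial linear combination
\[
f = \sum_{\pi \in \perms_{n,k,r}} c_\pi \, \symm_\pi(\xx_n^*)
\]
lies in $I_{n,k,r}$, with not all $c_\pi$ zero. Because the monomials $\{\initial_<(\symm_\pi(\xx_n^*)) : c_\pi \neq 0\} = \{\Phi(\pi) : c_\pi \neq 0\}$ are pairwise distinct elements of $\MMM_{n,k,r}$, the leading term of $f$ with respect to $<$ equals $c_{\pi_0} \Phi(\pi_0)$, where $\pi_0$ indexes the lex-largest such monomial. In particular $\Phi(\pi_0) \in \MMM_{n,k,r}$ appears with nonzero coefficient when $f$ is expanded in the standard monomial basis (after reducing all other terms of $f$ modulo $I_{n,k,r}$, which can only introduce monomials strictly smaller than the leading term). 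But $f \in I_{n,k,r}$ means $f$ reduces to $0$ in $R_{n,k,r}$, contradicting the fact that $\MMM_{n,k,r}$ is a basis. Hence all $c_\pi = 0$, completing the argument.

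There is really no obstacle here: once the proposition identifying the leading terms is in hand, the corollary is immediate from the standard monomial basis being genuinely a basis. The only mild subtlety is to observe that reduction modulo $I_{n,k,r}$ cannot destroy the leading term of $f$, which is why having $\initial_<(\symm_\pi(\xx_n^*)) \in \MMM_{n,k,r}$ (i.e.\ a standard monomial) is the crucial input; this is exactly what the preceding proposition supplies.
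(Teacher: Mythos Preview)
Your proposal is correct and matches the paper's intended approach: the paper does not spell out a proof of this corollary, treating it as immediate from the preceding proposition (which identifies the leading terms of the $\symm_\pi(\xx_n^*)$ with the standard monomial basis $\MMM_{n,k,r}$) together with Theorem~\ref{hilbert-series-theorem}. Your triangularity argument is exactly the standard way to make this step explicit; a slightly cleaner phrasing is that if $0 \neq f \in I_{n,k,r}$ then $\initial_<(f) \in \initial_<(I_{n,k,r})$, which directly contradicts $\initial_<(f) \in \MMM_{n,k,r}$.
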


It would be interesting to explore if this Schubert basis maintains many of the properties of the Schubert basis for the usual ring of coinvariants. For example, the following suggests that the structure constants of this Schubert basis are positive modulo $R_{n,k,r}$. 

\begin{question}
For two permutations $\pi, \pi^{\prime} \in \perms_{n,k,r}$, is it always true that the product
\begin{align}
\symm_{\pi}(\xx_n^*) \times \symm_{\pi^{\prime}}(\xx_n^{*})
\end{align}
has positive integer coefficients when expanded in the 
basis $\{\symm_\pi(\xx_n^*) : \pi \in \perms_{n,k,r}\}$ modulo $I_{n,k,r}$? 
Using \textsc{Sage}, we have checked that this is true for 
$1 \leq n, k \leq 4$ and $0 \leq r \leq n$. If so, do these coefficients count intersections in some family of varieties?
\end{question}


\section{Acknowledgements}
\label{Acknowledgements}
 
B. Rhoades was partially supported by the NSF Grant DMS-1500838.
A. T. Wilson was partially supported by a NSF Graduate Sciences Postdoctoral Research Fellowship.

\end{document}